\newtheorem{theorem}{Theorem}
\newtheorem*{thm}{Theorem}
\newtheorem{corollary}[theorem]{Corollary}
\newtheorem{lemma}[theorem]{Lemma}
\theoremstyle{definition}
\newtheorem{example}{Example}
\theoremstyle{remark}
\renewcommand{\div}{\operatorname{div}}
 \DeclareMathOperator\curl{curl}
\title[]{On localization of eigenfunctions \\ of the magnetic Laplacian}
\author[]{Jeffrey S. Ovall, Hadrian Quan, \\Robyn Reid \and Stefan Steinerberger}
\address{Jeffrey S. Ovall,
  Fariborz Maseeh Department of Mathematics and Statistics,
  Portland State University,
  Portland, OR 97201}
\email{jovall@pdx.edu}
\address{Hadrian Quan,
	Department of Mathematics,
        University of Washington,
	Seattle, WA 98195}
\email{hadrianq@uw.edu}
\address{Robyn Reid,
	Fariborz Maseeh Department of Mathematics and Statistics,
        Portland State University,
        Portland, OR 97201}
\email{reid3@pdx.edu}
\address{Stefan Steinerberger,
	Department of Mathematics,
        University of Washington,
	Seattle, WA 98195}
\email{steinerb@uw.edu}
\keywords{Localization, Eigenfunction, Schr{\"o}dinger Operator, Regularization.}
\subjclass[2010]{35J10, 65N25 (primary), 82B44 (secondary)} 
\thanks{This work was partially supported by the
	National Science Foundation through
	NSF grants DMS-2012285, DMS-2123224 and
	RTG grant DMS-2136228.}
\begin{document}

\begin{abstract}
Let $\Omega \subset \mathbb{R}^d$ and consider the magnetic Laplace operator given by $ H(A) = \left(- i\nabla - A(x)\right)^2$, where $A:\Omega \rightarrow \mathbb{R}^d$, subject to Dirichlet boundary conditions.  For certain vector fields $A$, this operator can have eigenfunctions, $H(A) \psi = \lambda \psi$, that are highly localized in a small region of $\Omega$. The main goal of this paper is to show that if $|\psi|$ assumes its maximum at $x_0 \in \Omega$, then $A$ behaves `almost' like a conservative vector field in a $1/\sqrt{\lambda}-$neighborhood of $x_0$ in a precise sense. In particular, we expect localization in regions where $\left|\curl A \right|$ is small. The result is illustrated with numerical examples.
\end{abstract}

\maketitle

\section{Introduction and Results}
\subsection{Introduction}
Given a Schr\"odinger operator $H$, this paper concerns mechanisms by which one can provide an \textit{a priori} prediction, based directly on $H$, of where its eigenfunctions may localize.  Suppose that $\Omega \subset \mathbb{R}^d$ is some bounded domain with smooth boundary (this assumption is purely for convenience) and assume that $V: \Omega \rightarrow \mathbb{R}_{\geq 0}$ is some potential, and consider the operator
$$ H \psi = - \Delta \psi + V \psi~.$$
It is known that  if $V$ changes slowly over large regions, then eigenfunctions of $H$ having eigenvalues near the bottom of the spectrum
should localize near the local minima of $V$. However, if $V$ oscillates extremely rapidly, this heuristic fails. Filoche and Mayboroda \cite{fil} proposed to instead solve the PDE
$$ H u = 1$$
and proved that the solution of this equation satisfies
\begin{equation} \label{eq:landscape}
 \frac{|\psi(x)|}{\| \psi\|_{L^{\infty}}} \leq \lambda \, u(x)
\end{equation}
for any eigenpair $(\lambda, \psi)$ of $H$.
In particular, an eigenfunction associated with $\lambda$ can only localize in the region $\left\{x \in \Omega: \lambda \, u(x) \geq 1 \right\}$. This has inspired a lot of subsequent research 
\cite{alt, altmann, arnold0, arnold, arnold2, har, lierl, rachh, steini} and raised the general question of how to predict localization in rough environments. There are now a variety of methods available, we refer to Altmann-Peterseim \cite{altmann}, the random inner product method \cite{lu} (see also Nenashev-Baranovskii-Meerholz-Gebhard \cite{phys0, phys}), the local landscape function \cite{li2, steini3}, operator perturbation techniques~\cite{ovall}, approaches motivated by operator theory \cite{mugnolo} and others.
Another natural problem of physical relevance is to consider general magnetic Schr\"odinger operator
$$ H(A) \psi =  \left(- i\nabla - A(x)\right)^2 \psi + V \psi,$$
where $A: \Omega \rightarrow \mathbb{R}^d$ is a vector field. This case is much less understood; early theoretical results are due to Z. Shen \cite{shen}. A variant of the Filoche-Mayboroda landscape was proposed by Poggi \cite{poggi}. Hoskins, Quan and Steinerberger \cite{hoskins} proved that the inequality \eqref{eq:landscape} remains
true -- in particular, one can simply go ahead and ignore the vector field $A$ completely. This seems to lead to surprisingly good predictions in a variety of cases. It seems as if, in practice, the potential $V$ has a lot more impact on localization than the vector field $A$. This, naturally, is in need of further clarification: if $\|A\|$ is very large or $V$ is very small, then one would expect the vector field to come into play. This was partially the motivation for the work that lead to the result presented in this paper: in practice, the localization behavior of $ \left(- i\nabla - A(x)\right)^2 + V$ is bound to be an interplay between the operators $-\Delta +V$ and  $ \left(- i\nabla - A(x)\right)^2$,  presumably with one dominating the other in the generic case. The localization behavior of $-\Delta +V$ is, at this point, reasonably well understood, so we focus on the magnetic Laplacian $ \left(- i\nabla - A(x)\right)^2$. 

\subsection{The Problem} We study the magnetic Laplacian
$$ H(A) = \left(- i\nabla - A(x)\right)^2$$
and the behavior of its eigenfunctions.
 The setting is as follows: we assume $\Omega \subset \mathbb{R}^d$ to be a bounded domain with smooth boundary and we assume $A: \Omega \rightarrow \mathbb{R}^d$ to be a differentiable vector field. The assumptions on the regularity of $\partial\Omega$ could be relaxed or dropped.  However, we are mainly interested in localization caused by $A$ as opposed to localization caused by boundary effects: boundary localization is a problem interesting in its own right \cite{felix, jones}, albeit of a completely different nature. Our main problem is now easy to state.
 
 \begin{quote}
   \textbf{Problem.} Predict whether and where eigenvectors of $H(A)$ may localize based on $A$ and eigenvalues $\lambda$ of $H(A)$. 
 \end{quote}

Before describing our main result, we start with a couple of observations. The first is that the problem is, in some sense, much less constrained than, say, $-\Delta + V$. For the pure Schr\"odinger case, we have (assuming the eigenfunction $\psi$ to be normalized in $L^2$) that
$$ \lambda = \left\langle (-\Delta + V)\psi, \psi \right\rangle = \int_{\Omega} |\nabla \psi|^2 dx + \int_{\Omega} V(x) \psi(x)^2dx~,$$
which immediately implies that $\psi$ is primarily localized in $\left\{x \in \Omega: V(x) \leq \lambda \right\}$. This is very well understood: we refer to the celebrated Agmon Theorem \cite{agmon1} and some of its recent variations \cite{filoche, keller, steini4, steini5}. 
In the magnetic case, we have, again assuming $\| \psi\|_{L^2} = 1$, the identity
$$ \lambda = \left\langle  \left(- i\nabla - A(x)\right)^2\psi, \psi \right\rangle = \int_{\Omega} \left| \left(- i\nabla - A(x)\right) \psi(x) \right|^2 dx~,$$
which does not lead to any natural a priori predictions of where localization could occur. (There is an interesting special case in $d=2$: if the magnetic field $B(x)$ is non-negative, then an analogue of this inequality remains true; we are grateful to Bernard Helffer for this remark).

\begin{figure}[h!]
	\subfloat[{$A$.}]
	{\includegraphics[width=0.48\textwidth]{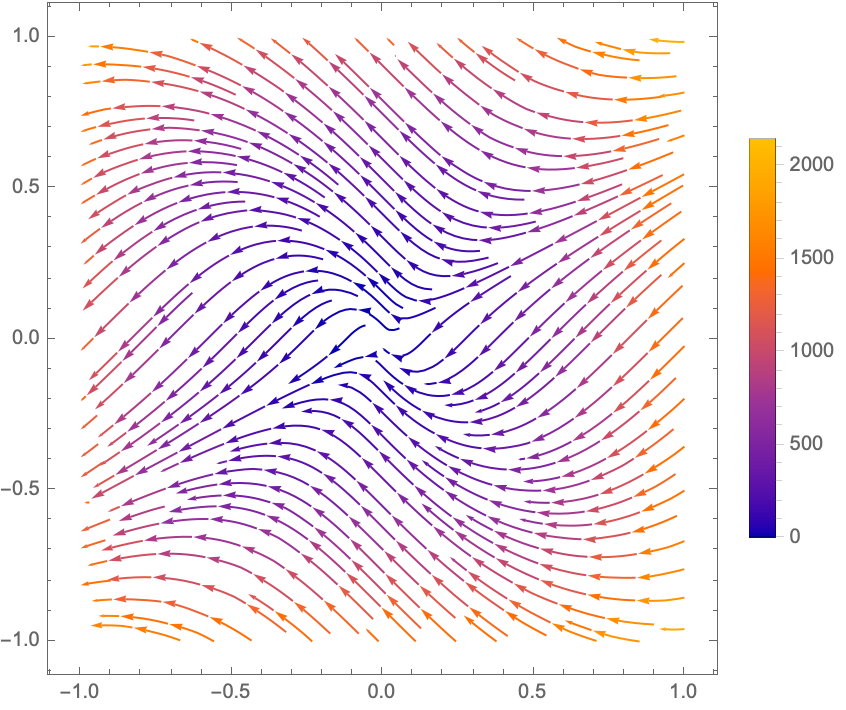}}
        \quad
 	\subfloat[{$|\curl A|$.}]
	{\includegraphics[width=0.48\textwidth]{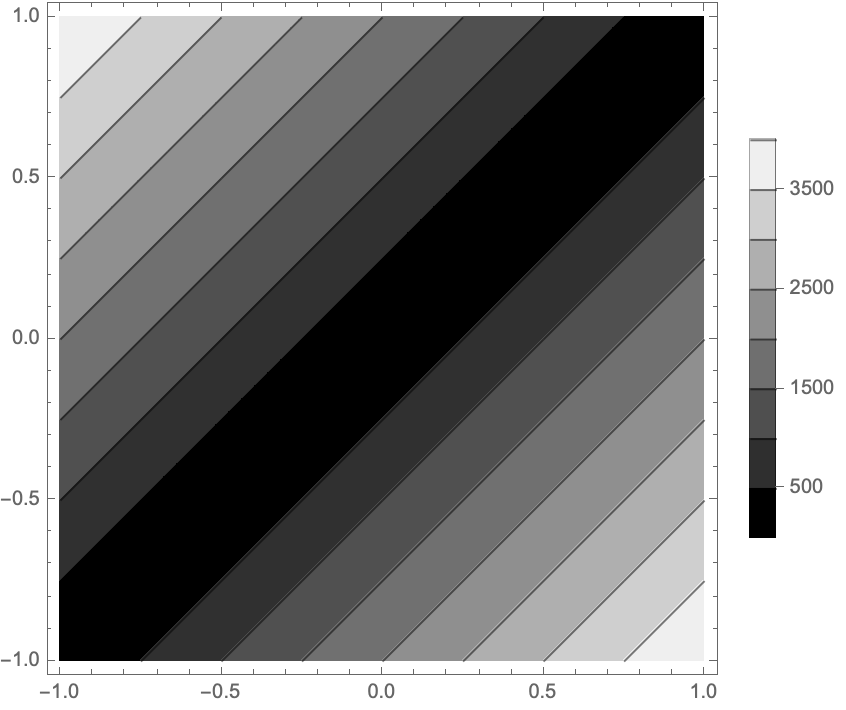}}
        \\
        \subfloat[{$|\psi|$, $\lambda\approx 120.52$.}]
	{\includegraphics[width=0.48\textwidth]{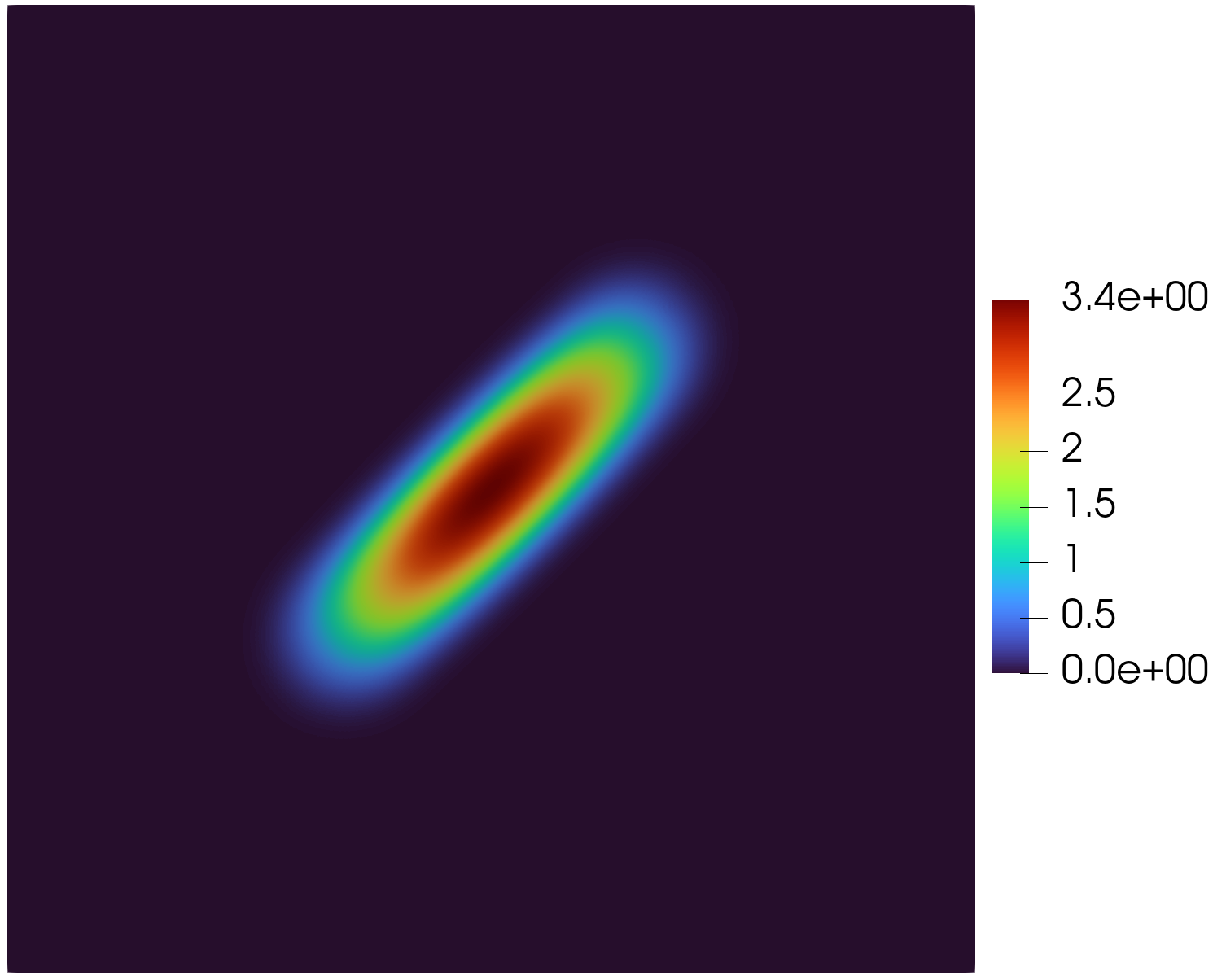}}
        \quad
 	\subfloat[{$|\psi|$, $\lambda\approx 139.80$.}]
	{\includegraphics[width=0.48\textwidth]{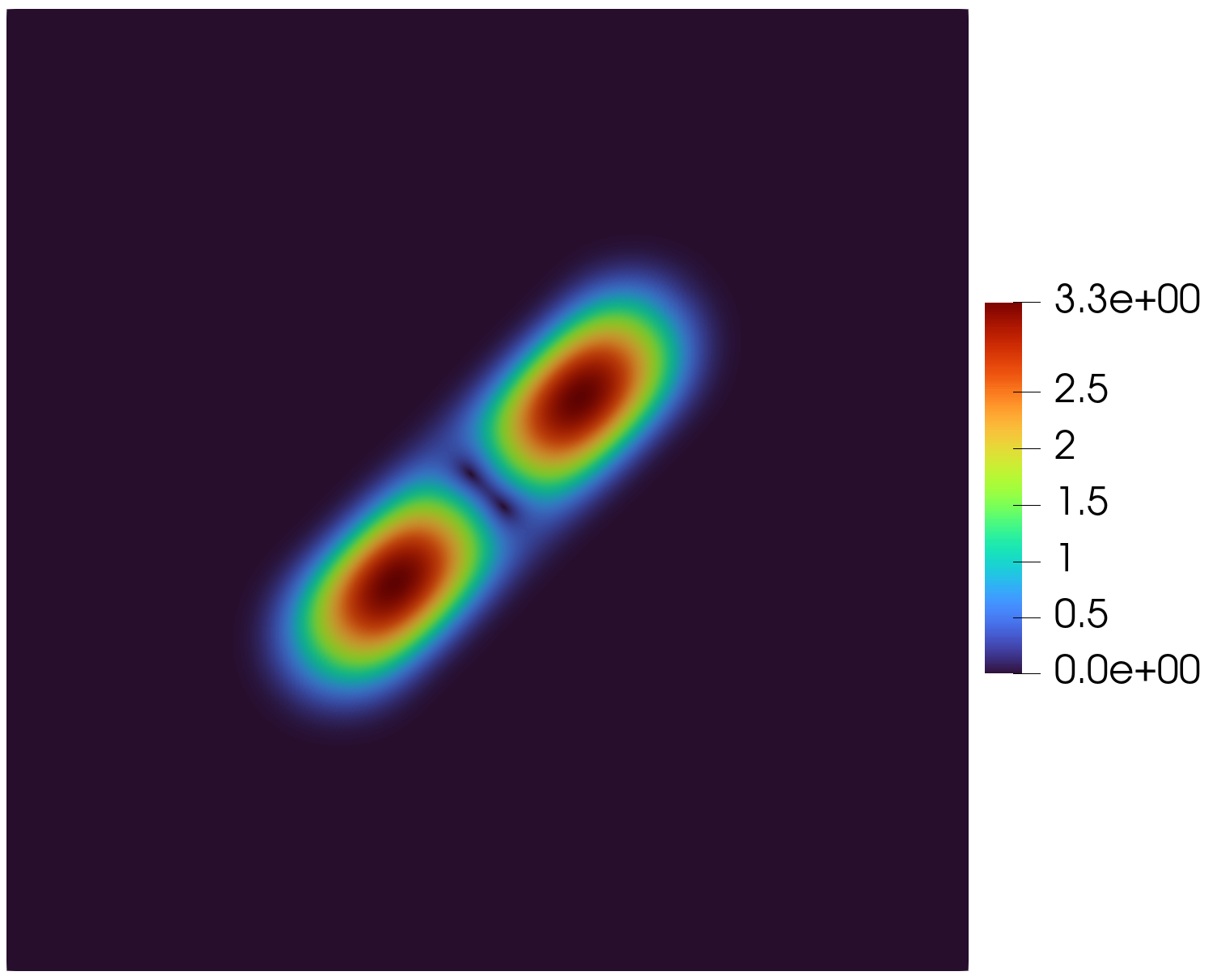}} 
        \\
        \subfloat[{$|\psi|$, $\lambda\approx 170.62$.}]
	{\includegraphics[width=0.48\textwidth]{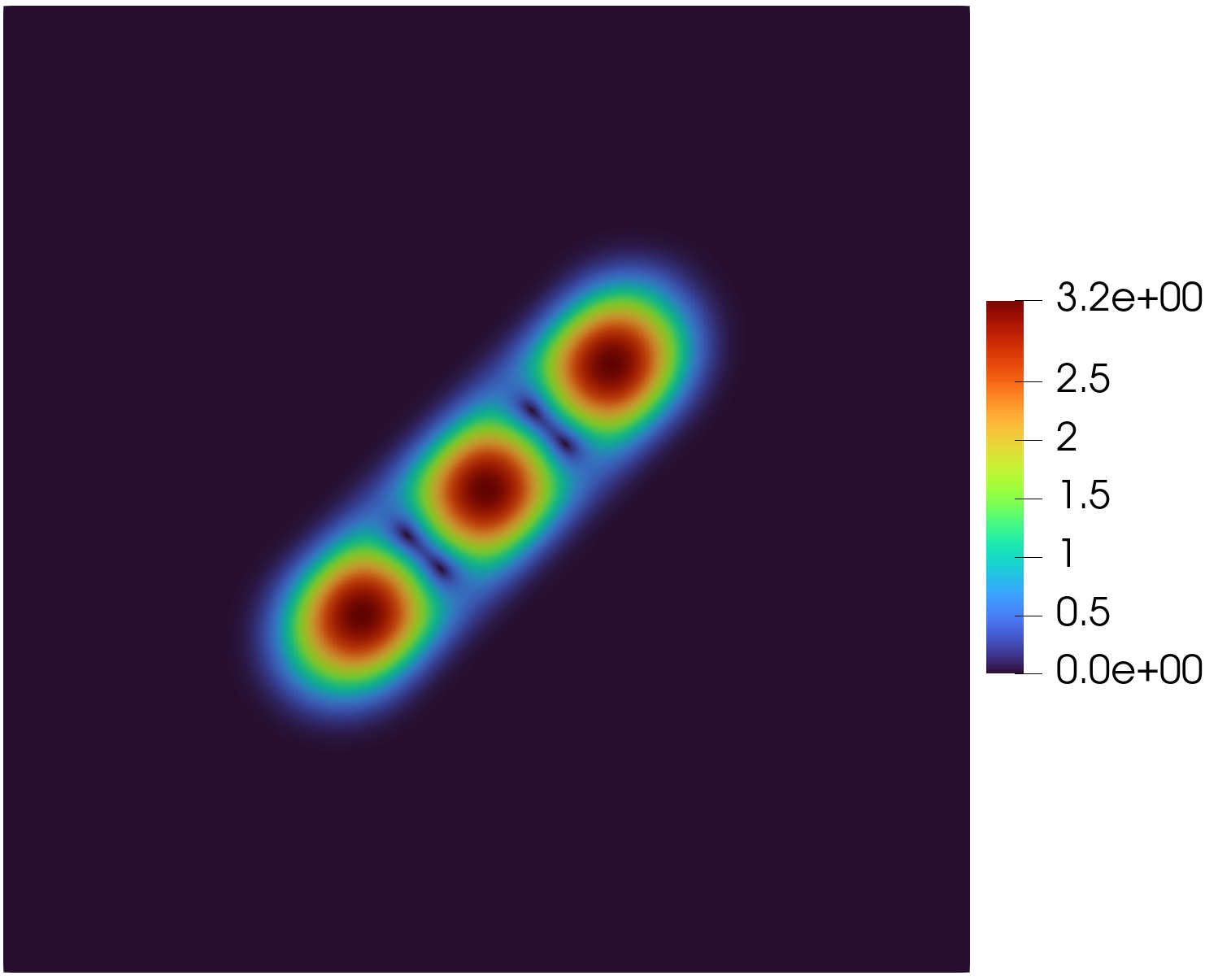}}
        \quad
 	\subfloat[{$|\psi|$, $\lambda\approx 211.07$.}]
	{\includegraphics[width=0.48\textwidth]{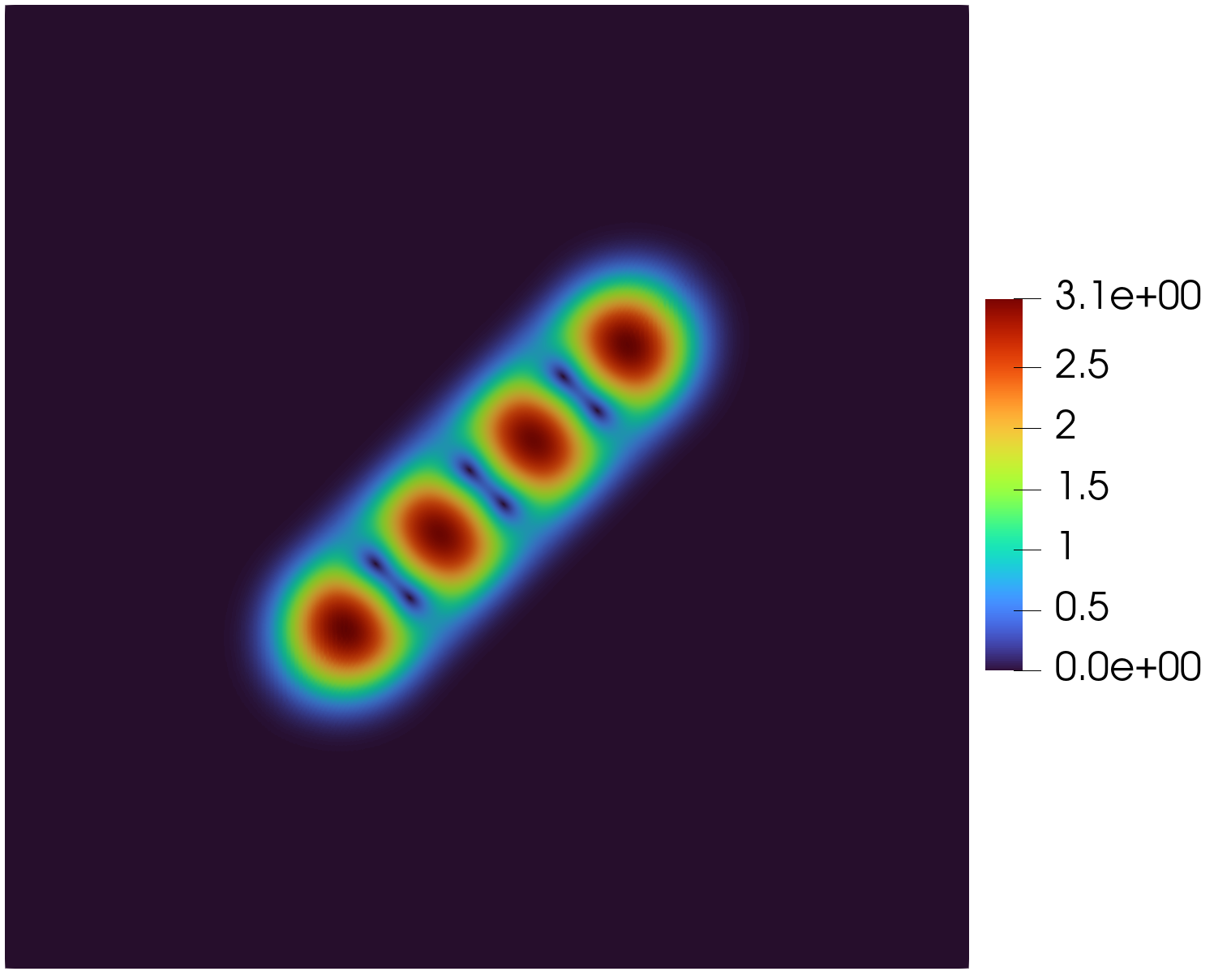}} 
        \caption{\label{SimpleFig} $A$, $|\curl A |$ and $|\psi|$
  for the smallest four eigenvalues. $A$ is given in Example~\ref{Simple}, with $a=1000$, see \S \ref{s:numerics} for details.}
\end{figure}

In general, the identity tells us that, at least for eigenvalues lower in the spectrum, $\nabla \psi(x)$ prefers to point in the direction $-i A(x) \psi(x)$ as much as possible, which does not lead to any obvious a priori restriction on where (or whether) $\psi$ may be localized within the domain $\Omega$. It does, however, suggest that $-i A(x) \psi(x)$ should behave as much as possible like the gradient of a function, at least for smaller eigenvalues. This naturally suggests that subregions where $A$ behaves `almost' like the gradient of a function should be of special interest. We will make this precise.

\subsection{Setup.} The purpose of this section is to introduce and motivate the language in which our main result will be phrased. We will first phrase it informally. This informal formulation will soon be made precise, and will motivate the concepts that we will introduce.\\

\begin{quote}
  \textbf{Main Result} (informal)\textbf{.}
  Let $(\lambda,\psi)$ be an eigenpair of $H(A)$, i.e. $H(A)\psi=\lambda\psi$, and suppose that $|\psi(x_0)| = \| \psi\|_{L^{\infty}}$ for some $x_0\in\Omega$.  Then the vector field $A$ is `close' to conservative in a $\lambda^{-1/2}-$neighborhood of $x_0$.\\
\end{quote}
A casual phrasing would be that localization happens in regions where the vector field $A(x)$ behaves `as much as possible' like the gradient of a function. For readers familiar with the Helmholtz decomposition, we refer to \S 2 for more details. The hallmark of a conservative vector field $F$ is the independence of the path integral. If $F = \nabla \phi$, then a path integral does not depend on the path $\gamma:[0,1] \rightarrow \Omega$, since
$$ \int_{\gamma} F \cdot dx = \phi(\gamma(1)) - \phi(\gamma(0)).$$
Let us now fix two points $x,y \in \Omega$ and a parameter $t > 0$. We will consider Brownian motion $\omega(s): [0,t] \rightarrow \Omega$ conditioned on starting at $x$ and being at $y$ at time $t$, meaning $\omega(0) = x$ and $\omega(t) = y$. Examples of what such random walks could look like are shown in Figure~\ref{fig:walk}.
\begin{center}
\begin{figure}[h!]
\begin{tikzpicture}
\node at (0,0) {\includegraphics[width=0.4\textwidth]{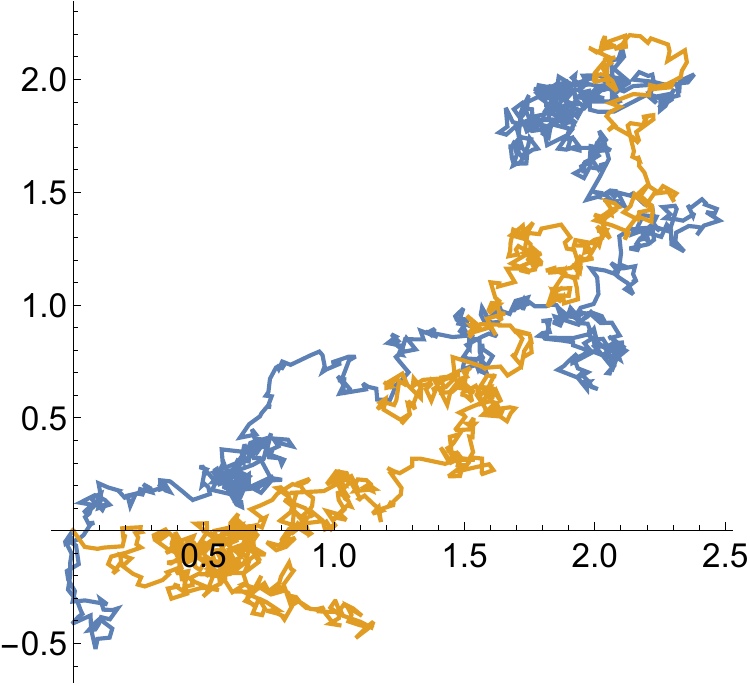}};
\node at (6,0) {\includegraphics[width=0.4\textwidth]{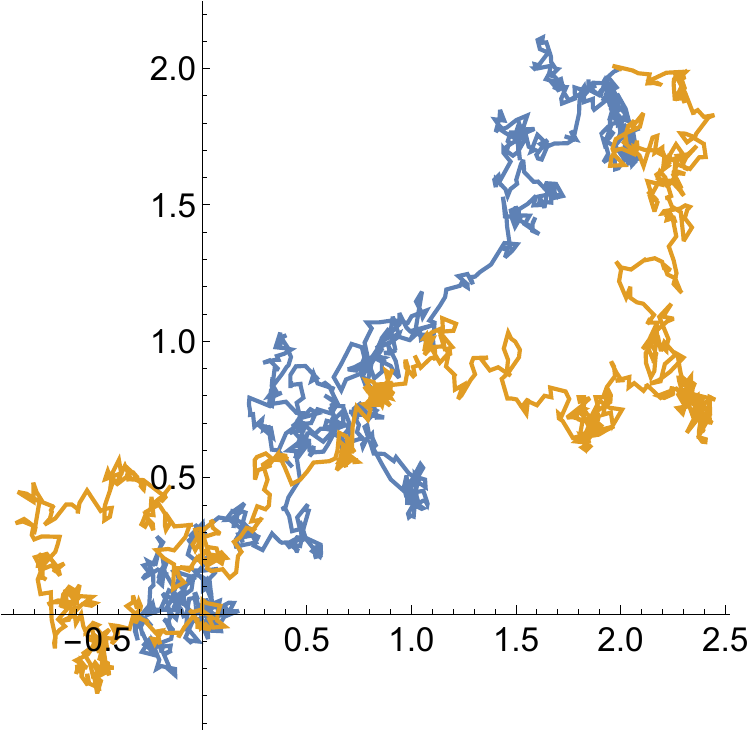}};
\end{tikzpicture}
\caption{Two pictures of two different (equally likely) Brownian motions conditioned to satisfy $\omega(0) = (0,0)$ and $\omega(1) = (2,2)$.}
\label{fig:walk}
\end{figure}
\end{center}

Given a vector field $A(x)$, the two points $x,y \in \Omega$, and Brownian motion conditioned on $\omega(0) = x$ and $\omega(t) = y$, we can now consider, for each such Brownian motion, the corresponding stochastic path integral in the sense of It\^o
$$ \omega \rightarrow \int_{0}^{t} A(\omega(s)) \cdot d\omega(s).$$
This random path integral can be considered as a (real-valued) random variable. If the vector field is irrotational, (meaning $A=\nabla \phi$), and solenoidal (meaning $\nabla\cdot A\equiv 0$), then this path integral is independent of the random walk $\omega$ and is deterministic (meaning that it is a Dirac measure, when considered as a random variable). If $A = \nabla \phi + F$ for some very small vector field $F$, then one might be inclined to expect that the random path integral is presumably close to $\phi(y) - \phi(x)$ but it will probably fluctuate a little around that value, with random fluctuations induced by $F$ and the path. Loosely put, the more irrotational the vector field is, the more this random variable is independent of the path. Conversely, if the vector field has a large solenoidal component then the value of the path integral will presumably depend very strongly on the path taken, and the random variable will be less concentrated around a fixed value. The entire approach is fundamentally nonlocal: considering the Aharonov-Bohm effect \cite{ah}, it stands to reason that some form of non-locality is expected or maybe even required.

\subsection{Main Result.} We can now state our main result. It implies, via Corollary 1, that, if $(\lambda,\psi)$ is an eigenpair of $H(A)$ and $\psi$ assumes its maximum at $x_0$,  then the path integral from $x_0$ to $y$ cannot be `too' random for most points $y$ in a $\lambda^{-1/2}-$neighborhood around $x_0$.  Such path integrals must be somewhat concentrated around some fixed value. There are two ways of achieving this: one such way is by having the vector field be close to a conservative vector field; the other way is by having the integration path be short (which will be equivalent to $\lambda$ being large). We can now formulate our main result for eigenfunctions with Dirichlet boundary conditions. The case of Neumann boundary conditions is, at least in practice, virtually identical (see \S \ref{s:neumann}).

\begin{thm}[Main Result] Let $(\lambda,\psi)$ be an eigenpair of $H(A)$, where $A$ is a vector field with the Helmholtz decomposition
$$ A = \nabla \phi + F \qquad \mbox{where} \quad \div F = 0,$$
subject to the Dirichlet boundary condition $\psi\big|_{\partial \Omega} = 0$. Suppose the eigenfunctions assumes its maximum, $|\psi(x_0)| = \|\psi\|_{L^{\infty}}$, for some $x_0\in\Omega$. Then, for every $t > 0$,
$$\bigintss_{\Omega} \left| \mathbb{E}_{\omega(0) = x_0, \omega(t) = y}  \exp\left( i  \int_{0}^{t} F \cdot d\omega(s) \right) \right|  \frac{1}{(4 \pi t)^{d/2}}\, \exp\!\left(- \frac{\|x_0-y\|^2}{4t}\right) dy \geq e^{-\lambda \, t}.$$
\end{thm}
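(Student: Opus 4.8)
The plan is to combine the Feynman–Kac representation for the magnetic heat semigroup with the elementary observation that $\|\psi\|_{L^\infty}$ can only decay exponentially in $t$ under $e^{-tH(A)}$. First I would write down the Feynman–Kac–Itô formula for the magnetic Laplacian: for $u_0 \in L^2(\Omega)$ with Dirichlet conditions,
$$\big(e^{-tH(A)}u_0\big)(x_0) = \mathbb{E}_{x_0}\!\left[ \exp\!\left(-i\int_0^t A(\omega(s))\cdot d\omega(s)\right) u_0(\omega(t))\,\mathbf{1}_{\tau_\Omega > t}\right],$$
where $\tau_\Omega$ is the exit time from $\Omega$ and $\omega$ is (unconditioned) Brownian motion started at $x_0$. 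Applying this with $u_0 = \psi$, using $H(A)\psi = \lambda\psi$ so that $e^{-tH(A)}\psi = e^{-\lambda t}\psi$, and evaluating at the maximum point $x_0$, gives
$$e^{-\lambda t}\,\|\psi\|_{L^\infty} = \left| \mathbb{E}_{x_0}\!\left[ \exp\!\left(-i\int_0^t A\cdot d\omega\right)\psi(\omega(t))\,\mathbf{1}_{\tau_\Omega>t}\right]\right| \le \mathbb{E}_{x_0}\!\left[ \left|\psi(\omega(t))\right|\,\mathbf{1}_{\tau_\Omega>t}\right] \le \|\psi\|_{L^\infty}\cdot\mathbb{P}_{x_0}(\tau_\Omega > t),$$
which already gives the crude bound $e^{-\lambda t}\le 1$ but throws away all the phase information. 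The point is to retain the phase: divide by $\|\psi\|_{L^\infty}$ and disintegrate the expectation over the endpoint $y = \omega(t)$. Conditioning Brownian motion on $\omega(t)=y$ produces the Brownian bridge with its Gaussian endpoint density $(4\pi t)^{-d/2}\exp(-\|x_0-y\|^2/4t)$, so
$$e^{-\lambda t} = \frac{1}{\|\psi\|_{L^\infty}}\left| \int_\Omega \mathbb{E}_{\omega(0)=x_0,\omega(t)=y}\!\left[\exp\!\left(-i\int_0^t A\cdot d\omega\right)\mathbf{1}_{\tau_\Omega>t}\right]\psi(y)\,\frac{e^{-\|x_0-y\|^2/4t}}{(4\pi t)^{d/2}}\,dy\right|.$$

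Next I would peel off the gradient part of the Helmholtz decomposition $A = \nabla\phi + F$. Along any path $\omega$ the Stratonovich/Itô line integral of a gradient telescopes, $\int_0^t \nabla\phi(\omega(s))\cdot d\omega(s) = \phi(\omega(t)) - \phi(\omega(0)) = \phi(y) - \phi(x_0)$ — note $\operatorname{div}(\nabla\phi)$ does not enter because with $A$ real-valued and the line integral interpreted in the Stratonovich sense the Itô correction is absent (or, equivalently, the $\operatorname{div}\nabla\phi$ term is exactly what is needed so that the conservative part contributes a pure, path-independent phase). Hence $\exp(-i\int_0^t A\cdot d\omega) = e^{-i(\phi(y)-\phi(x_0))}\exp(-i\int_0^t F\cdot d\omega)$, and the factor $e^{-i(\phi(y)-\phi(x_0))}$ has modulus one and can be pulled outside the conditional expectation. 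Taking absolute values inside the $y$-integral, bounding $|\psi(y)|\le\|\psi\|_{L^\infty}$, and discarding the indicator $\mathbf{1}_{\tau_\Omega>t}\le 1$ (which only shrinks the modulus of a conditional expectation of a unimodular variable — here one should check that dropping the indicator increases the integral, which follows since we are taking absolute values and then the triangle inequality over the probability measure), we arrive at
$$e^{-\lambda t} \le \int_\Omega \left| \mathbb{E}_{\omega(0)=x_0,\omega(t)=y}\exp\!\left(i\int_0^t F\cdot d\omega\right)\right|\,\frac{1}{(4\pi t)^{d/2}}\exp\!\left(-\frac{\|x_0-y\|^2}{4t}\right)dy,$$
which is the claimed inequality (the sign in the exponent is immaterial since we take the modulus).

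The main obstacle, and the step requiring the most care, is the treatment of the Dirichlet boundary condition and the exit-time indicator: the Feynman–Kac formula for the Dirichlet magnetic Laplacian genuinely carries the factor $\mathbf{1}_{\tau_\Omega>t}$, and one must argue that discarding it is legitimate in the direction needed. Since we land on a lower bound for an integral of nonnegative quantities, and since for any event $E$ and unimodular random variable $Z$ one has $|\mathbb{E}[Z\mathbf{1}_E]| \le \mathbb{E}[|Z|\mathbf{1}_E] \le \mathbb{E}[|Z|] $, the conditional expectation with the indicator is dominated in modulus by the one without; combined with $|\psi(y)|\le\|\psi\|_{L^\infty}$ this pushes the inequality the correct way. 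A secondary technical point is justifying the disintegration of the magnetic Feynman–Kac expectation into a Gaussian endpoint density times a Brownian-bridge conditional expectation of $\exp(-i\int F\cdot d\omega)$; this is standard for the free heat kernel, and the bounded measurable (indeed, for fixed path, unimodular) nature of the magnetic factor lets one carry it through by Fubini. The regularity hypotheses on $A$ and $\partial\Omega$ stated in the setup are exactly what make the magnetic Feynman–Kac–Itô formula and these manipulations rigorous.
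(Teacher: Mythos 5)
Your overall plan coincides with the paper's: apply the Feynman--Kac representation, evaluate at the maximum point, disintegrate the path expectation over the endpoint $y$, peel off the conservative part of $A$, and compare to the free Gaussian. The two cosmetic differences are worth noting. First, the paper invokes the gauge-invariance lemma (Lemma~\ref{lem:nice}) to replace $H(A)$ by $H(F)$ and then applies Feynman--Kac to the divergence-free field, so that the It\^o correction $\tfrac{1}{2}\int\nabla\cdot F\,ds$ vanishes outright; you instead apply Feynman--Kac to $A$ directly and peel off the gradient part as a path-independent phase $e^{-i(\phi(y)-\phi(x_0))}$, noting the Stratonovich/It\^o bookkeeping. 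These are equivalent, and yours is arguably more self-contained. Second, the paper disintegrates against the \emph{Dirichlet} heat kernel $p_t(x_0,y)$ of $\Omega$ and only at the end replaces $p_t\le (4\pi t)^{-d/2}e^{-\|x_0-y\|^2/4t}$; you disintegrate against the free Gaussian and keep the exit-time indicator $\mathbf{1}_{\tau_\Omega>t}$ inside the conditional expectation. Again these are equivalent descriptions of the same object.

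However, your stated justification for discarding the indicator is a genuine non-sequitur. You argue $|\mathbb{E}[Z\mathbf{1}_E]|\le\mathbb{E}[|Z|\mathbf{1}_E]\le\mathbb{E}[|Z|]$ and conclude that ``the conditional expectation with the indicator is dominated in modulus by the one without.'' But since $|Z|\equiv 1$, that chain only yields $|\mathbb{E}[Z\mathbf{1}_E]|\le 1$, which says nothing about $|\mathbb{E}[Z]|$; the inequality $|\mathbb{E}[Z\mathbf{1}_E]|\le|\mathbb{E}[Z]|$ is \emph{false} in general (take $Z=\pm 1$ equiprobable and $E=\{Z=1\}$: the left side is $1/2$, the right side is $0$). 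The direction you need is the opposite of what cancellation would suggest. The correct move, implicit in the paper's route, is: with $G_t$ the free Gaussian, one has $|\mathbb{E}^{\mathrm{free}}_{x_0,y}[Z\,\mathbf{1}_{\tau_\Omega>t}]|\,G_t(x_0,y)=|\mathbb{E}^{\Omega\text{-bridge}}_{x_0,y}[Z]|\,p_t(x_0,y)$, where $\mathbb{E}^{\Omega\text{-bridge}}$ is the expectation over bridges \emph{conditioned to remain in $\Omega$}; since $p_t(x_0,y)\le G_t(x_0,y)$ and the conditioned modulus is what appears in the theorem (the natural reading of $\mathbb{E}_{\omega(0)=x_0,\omega(t)=y}$ for a field defined only on $\Omega$), the desired inequality follows. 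So the gap is repairable by replacing ``drop the indicator'' with ``recognize indicator times Gaussian as the Dirichlet heat kernel, then bound $p_t$ by $G_t$,'' which is exactly what the paper does.
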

The Helmholtz decomposition is required for the following reason: given any vector field, one can always add the gradient of a function, $A \rightarrow A + \nabla g$, without changing localization properties of eigenfunctions (see \S 1.7). Thus, any pointwise or localized statement has to respect this type of invariance. This result may look involved at first sight but has a straightforward interpretation, which goes as follows: for $t \sim 1/\lambda$, the right-hand side is close to 1. It is easy to see, via triangle inequality, that the integral is always $\leq 1$, so it must be very close to 1.  Since the Gaussian weight is nonnegative, this forces the absolute value of the expectation to be close to 1 for most $y$. This, in turn, forces the random path integral to be close to a constant value ($\mbox{mod}~2\pi$) for most $y$. This constrains the path integrals in a neighborhood of $x_0$ to be `almost independent of the path' and  `close' to conservative vector fields. In greater detail, the argument proceeds as follows.
\begin{enumerate}
\item If  $t \sim c/\lambda$ for some small $0 < c < 1$, then the right-hand side is $e^{-\lambda t}\sim e^{-c} \sim 1-c$, which is a number very close to $1$. The integral is always $\leq 1$, which, combined with the lower bound, forces the integral to be close to $1$.
\item  The path integral is always real-valued.
Using the triangle inequality, we have
$$ \left| \mathbb{E} \exp(i X) \right| \leq \mathbb{E} \left| \exp(i X) \right| = 1.$$
Note that this inequality is usually strict. Equality is attained if and only if $X$ is constant modulo $2\pi$. Near-equality is attained if and only if $X$ varies little around a fixed value (modulo $2\pi$).
\item Applying the trivial inequality $ \left| \mathbb{E} \exp(i X) \right| \leq 1$, we are left with
  $$e^{-c} = e^{-\lambda \, t} \leq \int_{\Omega} \frac{1}{(4 \pi t)^{d/2}} \,\exp\!\left(- \frac{\|x_0-y\|^2}{4t}\right) dy \leq 1,$$
  for $t=c/\lambda$.
\item This, in turn, means that the trivial inequality $ \left| \mathbb{E} \exp(i X) \right| \leq 1$ has to be close to sharp for most $y$, which can only happen if the random variable is close to a constant (modulo $2\pi$) for most $y$.
\end{enumerate}

This argument can be made precise in a variety of ways. One way of making it precise (though, certainly not the only one) is by showing that localization near $x_0$ implies that the random path integral is tightly concentrated when going from $x_0$ to $y$ for most points $y$ close to $x_0$. We note again the two main ways a random path integral can be highly concentrated:
\begin{enumerate}
\item either the vector field $A$ is close to conservative (meaning $F$ is small), or
\item the integration path is relatively short (in which case the value is going to concentrate around 0), which happens when $t$ is small.
\end{enumerate}
Naturally, any combination of the two factors can also occur. We also note that the second factor shows that the presence of a very large solenoidal vector field implies that the eigenvalues $\lambda$ cannot be very small, because $t \sim \lambda^{-1}$ cannot be very large. 

\begin{corollary} \label{cor:1}
  There exists a universal constant $0 < c < 1$, depending only on the spatial dimension $d$, for which the following holds.
Let $(\lambda,\psi)$ be an eigenpair of $H(A)$, subject to the Dirichlet boundary condition $\psi\big|_{\partial \Omega} = 0$, with Helmholtz decomposition $A = \nabla \phi + F$ where $\div F =0$. Suppose that $|\psi(x_0)| = \|\psi\|_{L^{\infty}}$ for some $x_0\in\Omega$.  Let $t=c/\lambda$.  We say that $y \in \Omega$ is near-deterministic if any path integral beginning at $x_0$ and conditioned on $\omega(t) = y$ is likely to end up close to some fixed value (mod $2\pi$), in the following sense: 
$$ \sup_{z \in \mathbb{T}} \mathbb{P}\left(  \left|  \int_{0}^{t} F \cdot d\omega(s) \mod 2\pi  - z \right| \leq \frac{1}{100} \right) \geq \frac{99}{100}.$$
It holds that a large fraction of points $y$ in a $\sqrt{t}-$neighborhood of $x_0$ are near-deterministic,
$$ \left| \left\{y \in B_{\sqrt{t}}(x_0): y~\emph{near-deterministic} \right\} \right| \geq  \frac{9}{10}\cdot  \left|  B_{\sqrt{t}}(x_0) \right|.$$
\end{corollary}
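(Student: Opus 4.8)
The plan is to convert the single integral inequality of the Main Result into the claimed pointwise statement in three moves: a lower bound for the Gaussian weight on the ball together with the fact that its total mass is at most one, Markov's inequality, and an elementary lemma saying that a characteristic function close to $1$ in modulus forces concentration.

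First I would write
$$ M(y) \;=\; \left| \mathbb{E}_{\omega(0)=x_0,\, \omega(t)=y} \exp\!\left( i \int_0^t F\cdot d\omega(s)\right)\right|, \qquad p_t(y) \;=\; \frac{1}{(4\pi t)^{d/2}}\,\exp\!\left(-\frac{\|x_0-y\|^2}{4t}\right),$$
so that $0\le M(y)\le 1$ by the triangle inequality and $\int_\Omega p_t \le \int_{\mathbb{R}^d} p_t = 1$. Taking $t = c/\lambda$ in the Main Result and writing $M = 1-(1-M)$ gives
$$ \int_\Omega (1-M(y))\, p_t(y)\, dy \;\le\; 1 - e^{-\lambda t} \;=\; 1-e^{-c}.$$
On $B_{\sqrt t}(x_0)$ one has $\|x_0-y\|^2/(4t)\le 1/4$, hence $p_t(y)\ge c_d\, t^{-d/2}$ with $c_d = e^{-1/4}(4\pi)^{-d/2}$. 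Assuming $B_{\sqrt t}(x_0)\subset\Omega$ (otherwise one intersects with $\Omega$ throughout, which only decreases the left-hand side) and using $|B_{\sqrt t}(x_0)| = \omega_d t^{d/2}$, I would conclude
$$ \frac{1}{|B_{\sqrt t}(x_0)|} \int_{B_{\sqrt t}(x_0)} (1-M(y))\, dy \;\le\; \frac{1-e^{-c}}{c_d\, \omega_d} \;=:\; \eta(c), \qquad \eta(c) \le \frac{c}{c_d\,\omega_d}.$$

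Next, fix a threshold $\varepsilon>0$. Markov's inequality applied to the nonnegative function $1-M$ on $B_{\sqrt t}(x_0)$ gives $|\{y\in B_{\sqrt t}(x_0): M(y) < 1-\varepsilon\}| \le (\eta(c)/\varepsilon)\,|B_{\sqrt t}(x_0)|$. It then remains to show that $M(y)\ge 1-\varepsilon$ forces $y$ to be near-deterministic for a suitable $\varepsilon$, which is the following elementary lemma: if $X$ is a real random variable with $|\mathbb{E}\, e^{iX}|\ge 1-\varepsilon$, then writing $\mathbb{E}\, e^{iX} = r\,e^{iz}$ with $z\in\mathbb{T}$ one has $\mathbb{E}[1-\cos(X-z)] = 1-r \le \varepsilon$; letting $Y\in(-\pi,\pi]$ denote the representative of $(X-z)\bmod 2\pi$ and using $1-\cos u \ge 2u^2/\pi^2$ for $|u|\le\pi$ yields $\mathbb{E}[Y^2] \le \pi^2\varepsilon/2$, whence by Chebyshev $\mathbb{P}(|Y| > 1/100)\le (10^4\pi^2/2)\,\varepsilon$. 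Thus if $\varepsilon \le 2\,\pi^{-2} 10^{-6}$ then $\mathbb{P}(|X\bmod 2\pi - z|\le 1/100)\ge 99/100$, i.e. $y$ is near-deterministic with witness $z$, since $|Y|$ is exactly the torus distance from $X\bmod 2\pi$ to $z$.

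Finally I would fix the constants: take $\varepsilon := 2\pi^{-2}10^{-6}$ (forced by the lemma) and then choose $c$ so small that $\eta(c)/\varepsilon \le 1/10$, for instance $c \le c_d\,\omega_d\,\varepsilon/10$; since $c_d$ and $\omega_d$ depend only on $d$, this $c$ is a universal constant depending only on $d$, as required. With these choices, $\{y\in B_{\sqrt t}(x_0): M(y)\ge 1-\varepsilon\}$ has measure at least $\tfrac{9}{10}|B_{\sqrt t}(x_0)|$ and consists of near-deterministic points, which is the corollary. I do not expect a genuine obstacle here, as all the analytic content sits in the Main Result; the only things needing care are the bookkeeping that keeps the two constants $\varepsilon$ and $c$ consistent (and $c$ dependent on $d$ alone), and the minor geometric point that $B_{\sqrt t}(x_0)$ may protrude from $\Omega$ when $x_0$ is close to $\partial\Omega$, which is harmless for large $\lambda$ and otherwise is handled by replacing the ball by $B_{\sqrt t}(x_0)\cap\Omega$.
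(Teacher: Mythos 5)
Your proof is correct, and it is in essence the direct (contrapositive) form of the paper's own argument. The paper argues by contradiction: it invokes a short Lemma~\ref{lem:mini} asserting that a non--near-deterministic random variable has $|\mathbb{E}\,e^{iX}|\leq c_1<1$ (as printed, that lemma's hypothesis has its inequality reversed --- it should read $\sup_z \mathbb{P}(\cdots)<99/100$, as your version implicitly corrects), splits the integral over $\Omega_1$ and its complement, concludes $\int_\Omega |\mathbb{E}_{x_0,y}(t)|\,p_t\,dy \leq 1-c_2$, and contradicts the lower bound $e^{-c}$ as $c\to 0$. You prove the same thing directly: the converse implication with explicit constants via $\mathbb{E}[1-\cos(X-z)] = 1-|\mathbb{E}\,e^{iX}|$, the bound $1-\cos u\geq 2u^2/\pi^2$, and Chebyshev; then Markov applied to $1-M$ together with a Gaussian lower bound on $B_{\sqrt t}(x_0)$. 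What you gain is that the constant $c$ is made explicit (the paper's ``let $c\to 0$'' is a soft version of ``$c$ small enough depending on $d$''), and the step the paper handles qualitatively (``positive proportion $\Rightarrow\leq 1-c_2$'') becomes a clean Markov inequality. One point you flag and the paper glosses over: the statement with $|B_{\sqrt t}(x_0)|$ on the right genuinely requires $B_{\sqrt t}(x_0)\subset\Omega$ (e.g.\ $\sqrt{c/\lambda}<\operatorname{dist}(x_0,\partial\Omega)$), since near $\partial\Omega$ the relevant measure is that of $B_{\sqrt t}(x_0)\cap\Omega$, which can be a small fraction of the full ball.
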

We emphasize again that this is only one of many possible ways of deducing a rigorous concentration result from the Main Theorem; many others are possible. We also note that the Main Result can be obtained by using the heat kernel $p_t(x,y)$ of the bounded domain $\Omega$, instead of the Gaussian weight (the heat kernel on $\mathbb{R}^d$). This slightly stronger result might be advantageous in certain settings.

\subsection{Using $\curl$ as a proxy.} For ease of exposition, we assume now that $\Omega \subset \mathbb{R}^2$ throughout this subsection.
 Given a vector field $A$, we perform again a Helmholtz decomposition and write it as
\begin{align*}
 A(x) = \nabla \phi + F(x) \qquad \mbox{where} \quad \phi \in C^1(\Omega) \quad ~\mbox{and} ~\div(F) = 0.
\end{align*} 
The vector field $\nabla \phi$ is conservative, and has no impact on localization properties (see \S \ref{s:helm} for a proof). This can also be seen as a gauge invariance, as described in \S \ref{s:helm}, which shows  that the only effect of the quantity $\nabla \phi$ is to modulate the eigenfunction, $\psi \rightarrow e^{-i \phi} \psi$. As discussed in the main Theorem and Corollary 1, one would expect localization to occur in regions where $F(x)$ behaves `almost' like a conservative vector field. These regions should be the ones where $\left|\curl(F)\right|$ is relatively small. Numerical examples (\S~\ref{s:numerics}) show that this is a reasonable heuristic in practice.  We recall that the curl of a 2D vector field $F=(F_1,F_2)$ is the scalar field $\curl(F)=\partial F_2/\partial x_1-\partial F_1/\partial x_2$.

\begin{figure}
  \centering
	\subfloat[{$A$.}]
	{\includegraphics[width=0.35\textwidth]{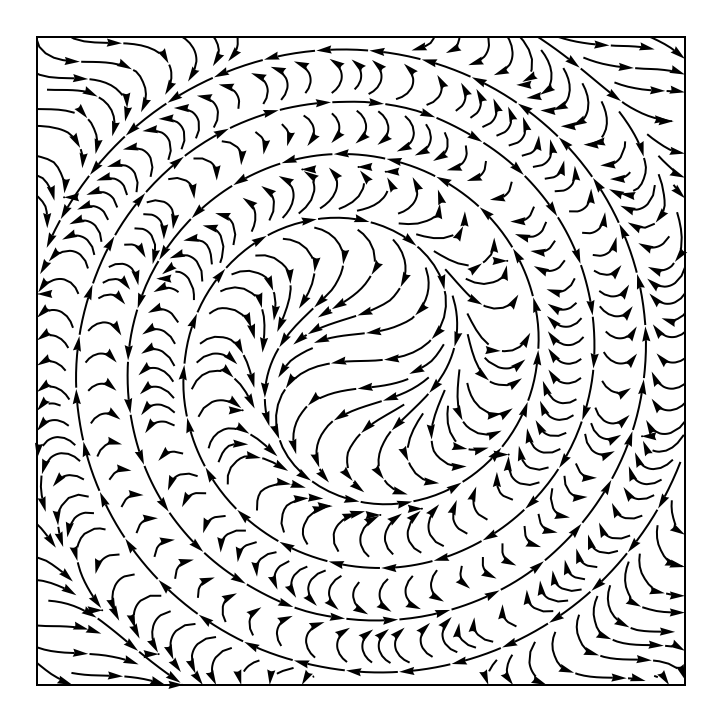}}
        \quad
 	\subfloat[{$|\curl A|$.}]
	{\includegraphics[width=0.39\textwidth]{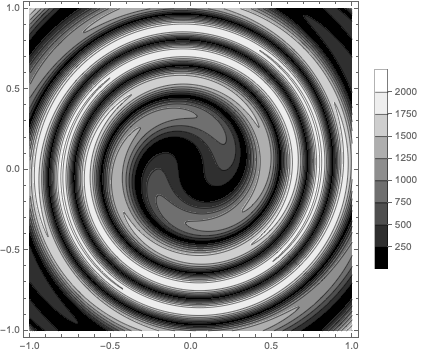}}
        \\
 	\subfloat[{ $\lambda\approx 92.8$.}]
	{\includegraphics[width=0.30\textwidth]{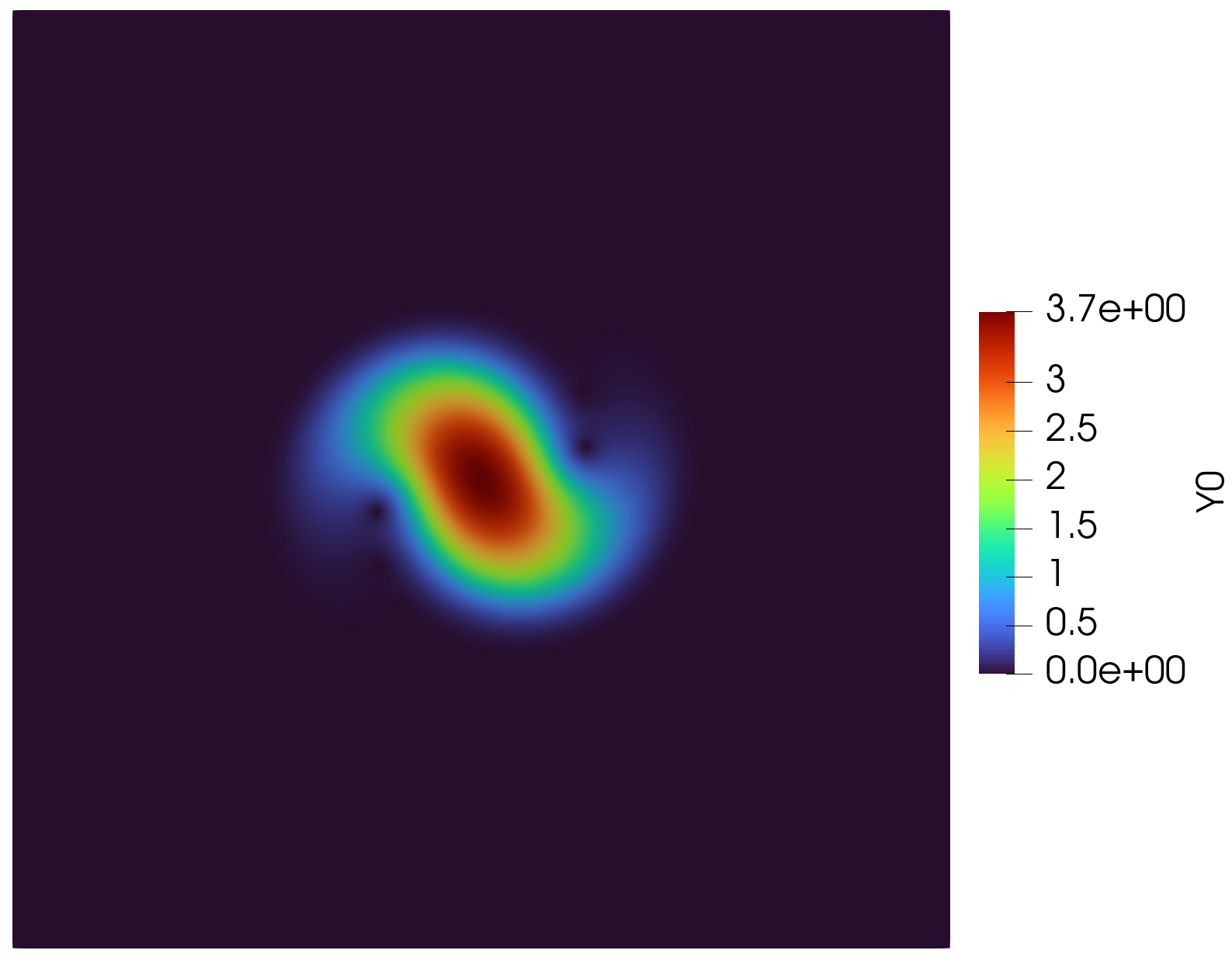}}
        \quad
 	\subfloat[{ $\lambda\approx 132.6$.}]
	{\includegraphics[width=0.30\textwidth]{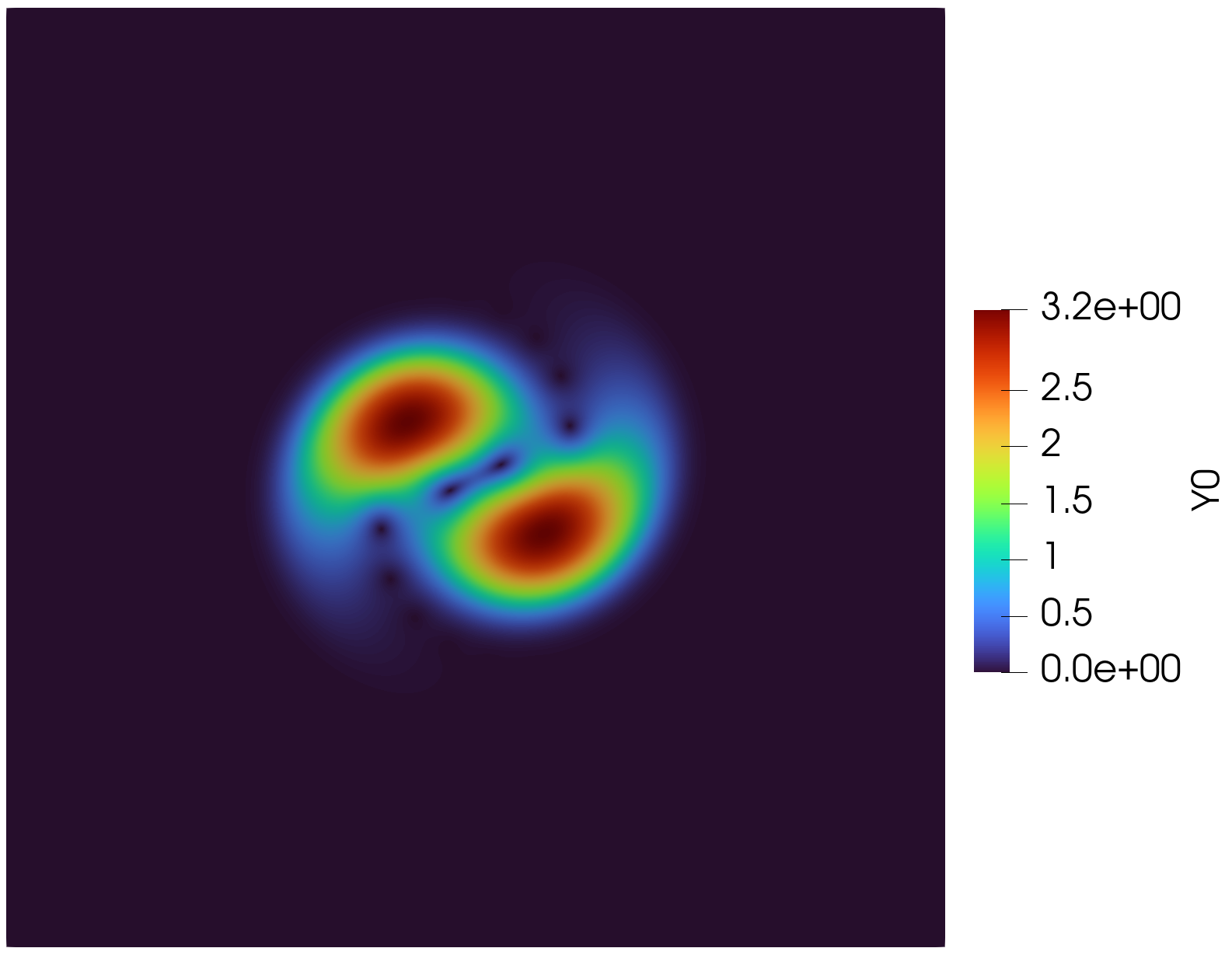}}
        \quad
        \subfloat[{ $\lambda\approx 163.8$.}]
	{\includegraphics[width=0.30\textwidth]{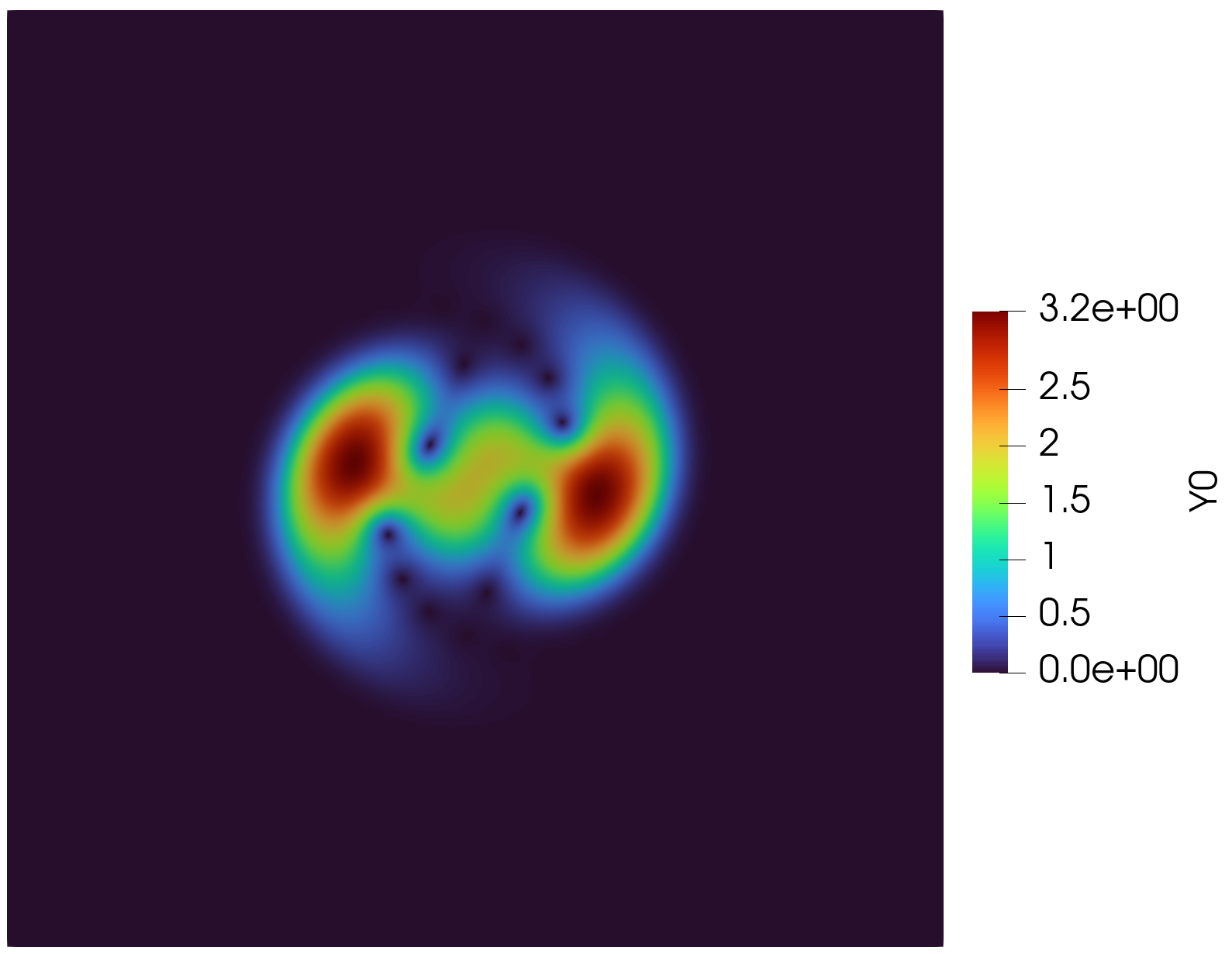}}
        \\
 	\subfloat[{ $\lambda\approx 167.2$.}]
	{\includegraphics[width=0.30\textwidth]{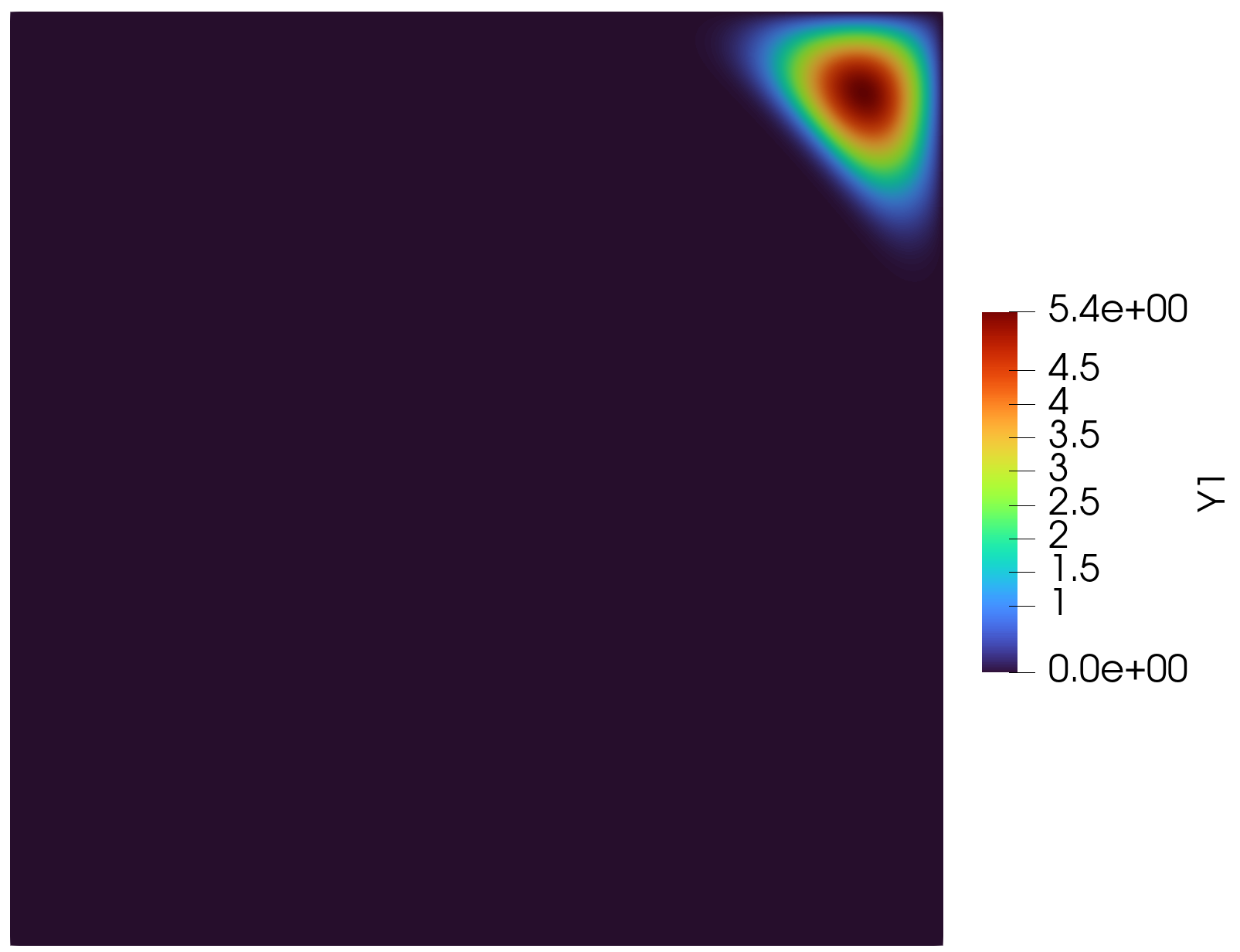}}
        \quad
        \subfloat[{ $\lambda\approx 167.2$.}]
	{\includegraphics[width=0.30\textwidth]{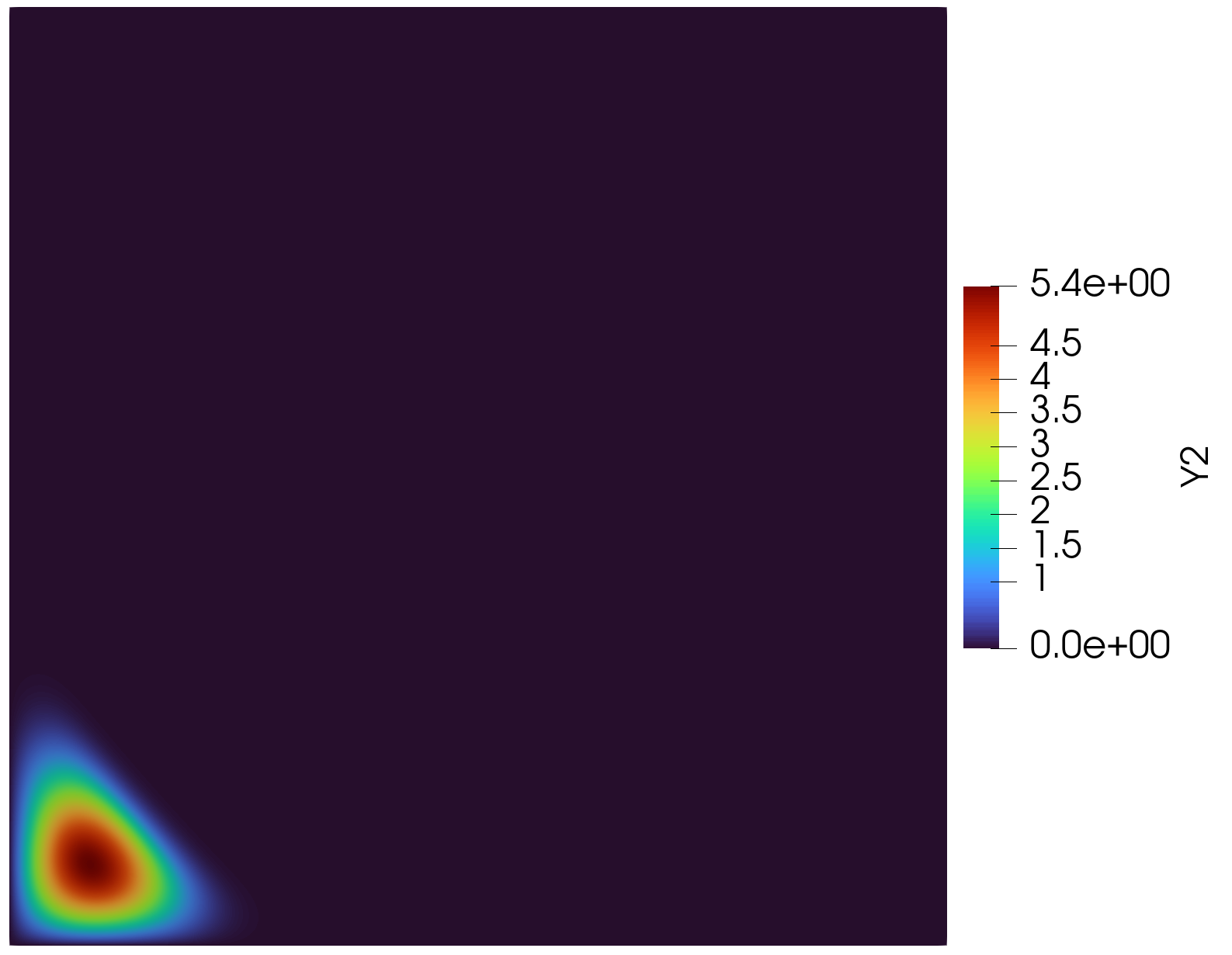}}
       \quad
 	\subfloat[{ $\lambda\approx 185.5$.}]
	{\includegraphics[width=0.30\textwidth]{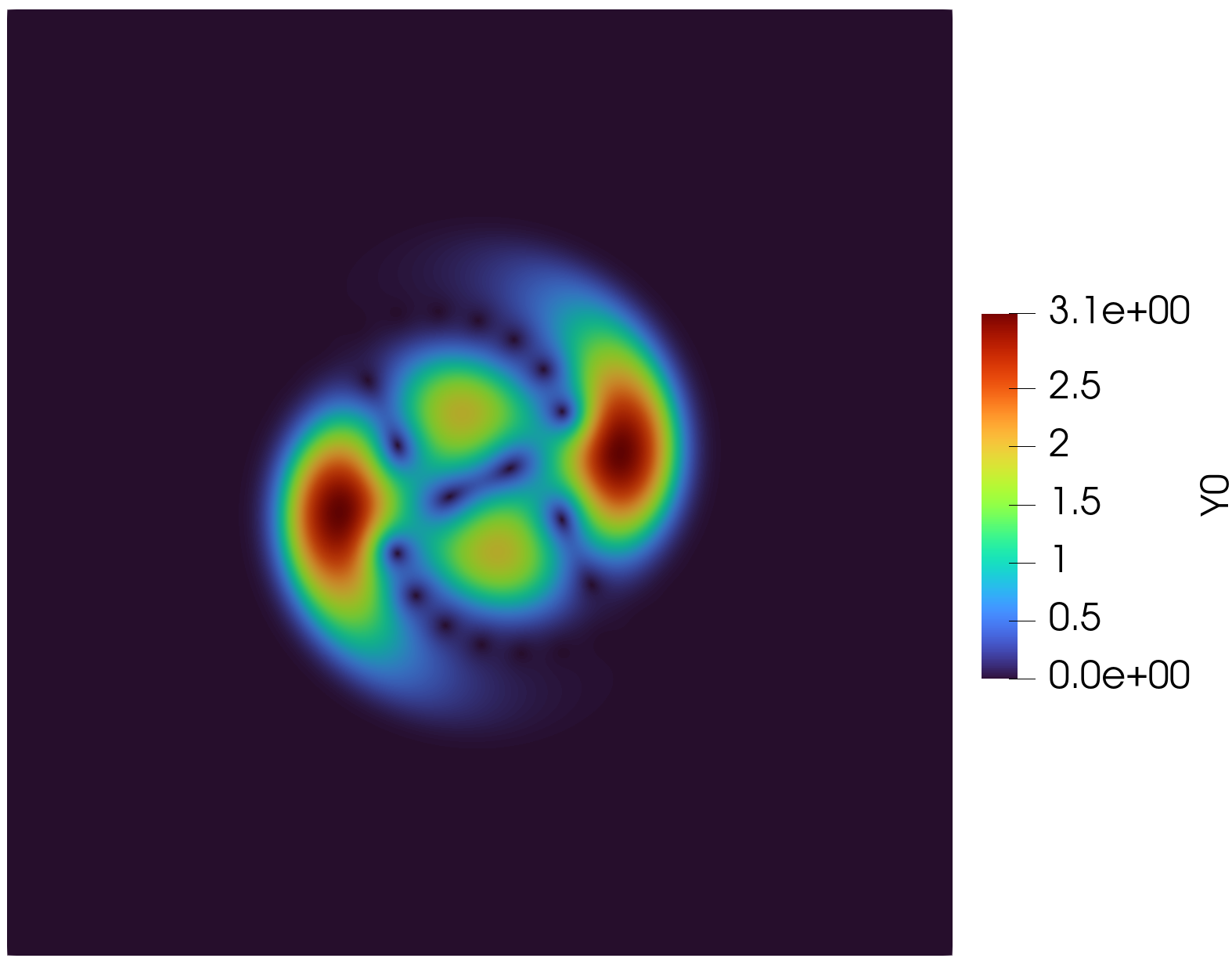}}
        \\
 	\subfloat[{$\lambda\approx 205.3$.}]
	{\includegraphics[width=0.30\textwidth]{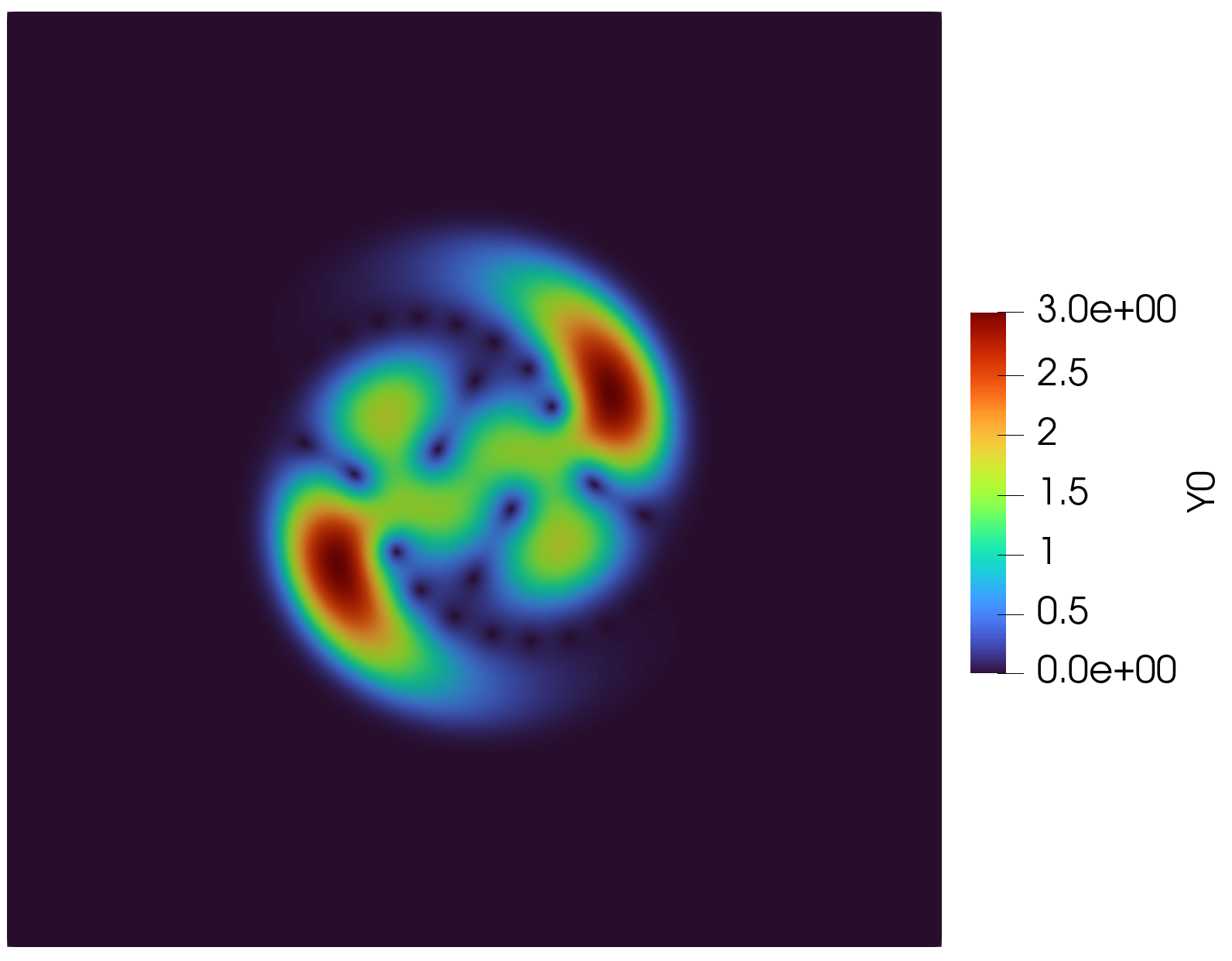}}
        \quad
        \subfloat[{ $\lambda\approx 220.5$.}]
	{\includegraphics[width=0.30\textwidth]{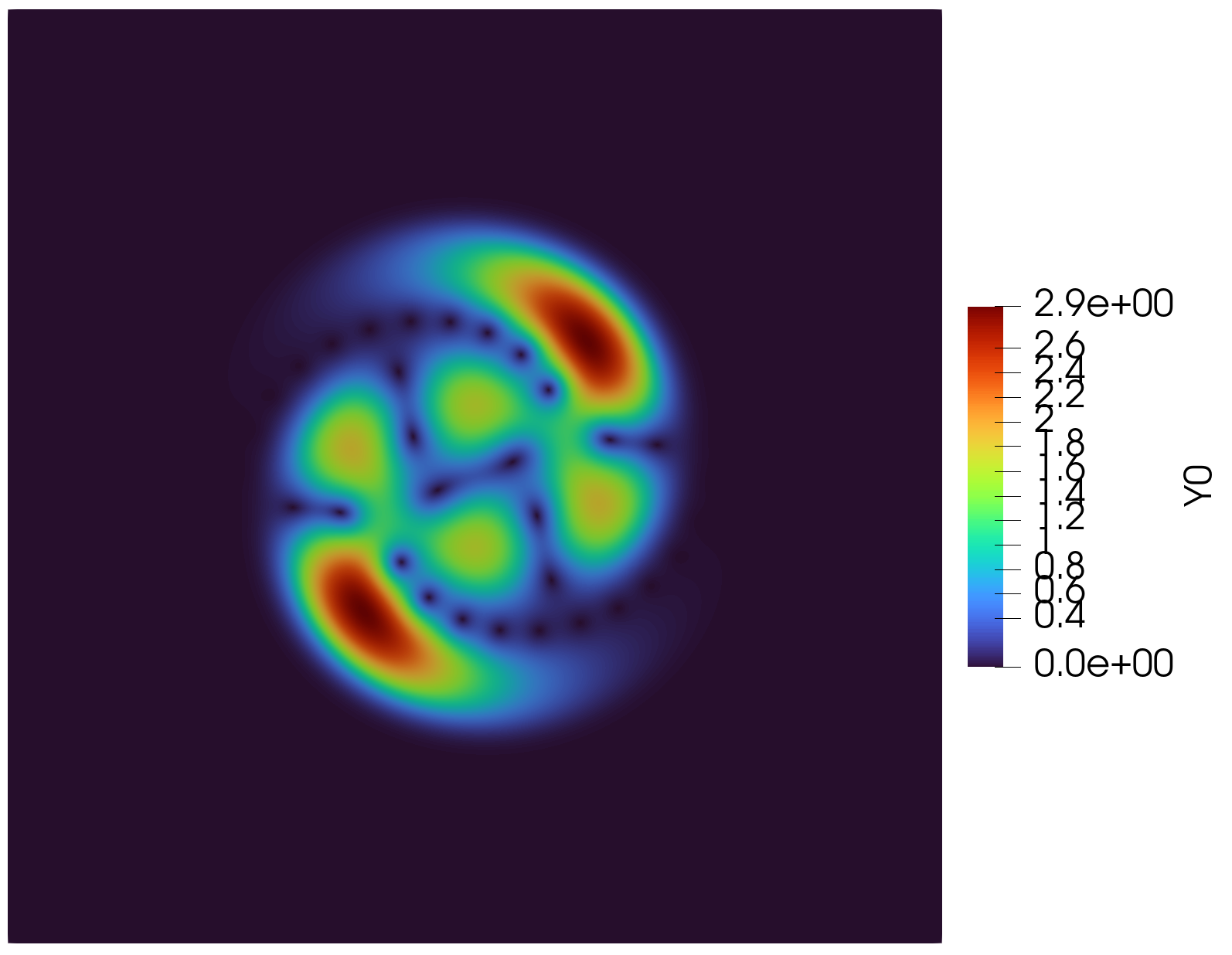}}
       \quad
 	\subfloat[{$\lambda\approx 234.8$.}]
	{\includegraphics[width=0.30\textwidth]{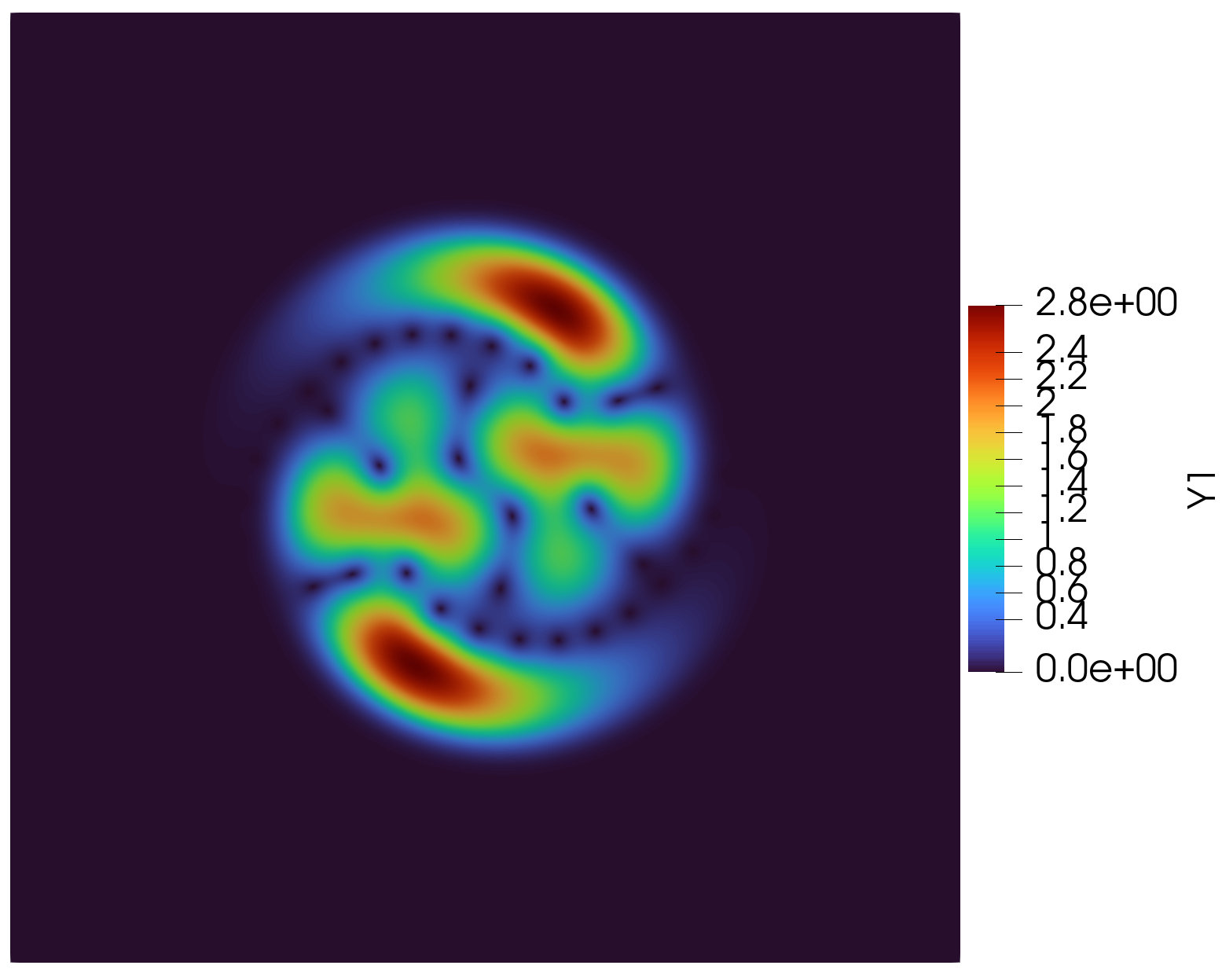}}  
  \caption{\label{Example3Fig} $A$, $|\curl A|$ and $|\psi|$
  for the smallest nine eigenvalues. $A$ is given in Example~\ref{Example3}, with $a=50$, see \S \ref{s:numerics}.}
\end{figure}

\vspace*{2mm}
Given $x_0\in\Omega$, let $A_{\mbox{\tiny lin}}(x)=A(x_0)+J(x_0)(x-x_0)$ be the linearization of $A$ about $x_0$.  Here, $J$ is the Jacobian matrix of $A$.  We then define $A_{\mbox{\tiny nonlin}} =A-A_{\mbox{\tiny lin}}$.
In anticipation of the proof of Corollary~\ref{cor:2}, and to give an early indication of the importance of the curl, we note that 
$A_{\mbox{\tiny lin}}=A_1+A_2$, where
\begin{equation}\label{LinearSplitting}
\begin{split}
  A_1(x)&=A(x_0)+\frac{1}{2}\left(J(x_0)+J(x_0)^T\right)(x-x_0)
\;,\;
R=\begin{pmatrix}0&-1\\1&0\end{pmatrix}~,\\
  A_2(x)&=\frac{1}{2}\left(J(x_0)-J(x_0)^T\right)(x-x_0)=\frac{\curl A(x_0)}{2}\,R(x-x_0)~.
\end{split}
\end{equation}
It is clear that $A_1$ is conservative and $A_2$ is solenoidal.

\begin{corollary} \label{cor:2}
  Let $(\lambda,\psi)$ be an eigenpair of $H(A)$, subject to the Dirichlet boundary condition $\psi\big|_{\partial \Omega} = 0$, and suppose that $|\psi(x_0)| = \|\psi\|_{L^{\infty}}$ for some $x_0\in\Omega$.  If $A_{\mbox{\tiny nonlin}} $
 is sufficiently small in a $1/\sqrt{\lambda}-$neighborhood of $x_0$
then
$$  \left| \curl A(x_0) \right| \leq c \, \lambda^2. $$
\end{corollary}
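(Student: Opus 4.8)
The strategy is to apply the Main Theorem (or Corollary 1) to the vector field $A$ with its Helmholtz decomposition $A = \nabla\phi + F$, and then use the smallness of $A_{\text{nonlin}}$ to replace $F$ near $x_0$ with the explicit linear solenoidal field $A_2(x) = \frac{1}{2}\curl A(x_0)\, R(x-x_0)$ appearing in \eqref{LinearSplitting}. Since $\nabla\phi$ carries no information (gauge invariance), and $A_1$ is conservative, the solenoidal part of $A$ near $x_0$ is, to leading order, exactly $A_2$. The key point is that for this particular linear solenoidal field the stochastic path integral $\int_0^t A_2 \cdot d\omega(s)$ can be computed more or less explicitly: it is, up to lower-order It\^o corrections, proportional to $\curl A(x_0)$ times the signed area swept out by the Brownian bridge from $x_0$ to $y$ (this is the classical L\'evy area). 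The characteristic function of the L\'evy stochastic area of a Brownian bridge is a classical object with a known closed form (of the type $\theta/\sinh\theta$), so one can compute the expectation $\mathbb{E}_{\omega(0)=x_0,\omega(t)=y}\exp(i\int_0^t A_2\cdot d\omega)$ explicitly.

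In detail, I would proceed as follows. First, set $t = c/\lambda$ as in the Main Theorem and observe that the theorem forces the weighted integral of $\left|\mathbb{E}\exp(i\int_0^t F\cdot d\omega)\right|$ against the Gaussian $(4\pi t)^{-d/2}\exp(-\|x_0-y\|^2/4t)$ to be at least $e^{-c}$. Second, use that $A_{\text{nonlin}}$ is small on $B_{1/\sqrt\lambda}(x_0)$ — and that the Brownian bridge of duration $t\sim 1/\lambda$ from $x_0$ to a point $y$ within distance $\sqrt t\sim 1/\sqrt\lambda$ stays, with overwhelming probability, inside $B_{C/\sqrt\lambda}(x_0)$ — to argue that $\int_0^t F\cdot d\omega$ differs from $\int_0^t A_2\cdot d\omega$ by a quantity that is small in probability (here one absorbs the conservative pieces $\nabla\phi$ and $A_1$ into a boundary term $g(y)-g(x_0)$ that factors out of the modulus). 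Third, substitute the explicit L\'evy-area characteristic function: with $\beta = \tfrac{1}{2}\curl A(x_0)$, one gets an expression of the shape $\bigl(\tfrac{\beta t}{\sinh(\beta t)}\bigr)$ times a Gaussian correction in $y$. Fourth, plug this into the Main Theorem inequality; since $\beta t/\sinh(\beta t) \le 1$ with the ratio decaying once $|\beta t|\gtrsim 1$, the lower bound $e^{-c}$ can only hold if $|\beta t|$ is bounded by a constant, i.e. $|\curl A(x_0)|\, t \lesssim 1$, which with $t = c/\lambda$ gives $|\curl A(x_0)| \lesssim \lambda$. To reach the stated $\lambda^2$ one simply notes this is weaker, so any cruder bookkeeping of error terms (which may cost one extra power of $\lambda$ from the nonlinear remainder or from the conditioning estimate) still lands inside $c\lambda^2$.

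The main obstacle I expect is the third and fourth steps done \emph{jointly and uniformly in $y$}: one needs the characteristic function of the L\'evy area of a Brownian bridge pinned at an arbitrary endpoint $y$ (not just the bridge returning to $x_0$), and one needs the error from $A_{\text{nonlin}}\neq 0$ to be controlled \emph{uniformly over the Gaussian-typical set of $y$} and over the bridge paths, rather than just in expectation. Making "$A_{\text{nonlin}}$ sufficiently small" quantitative enough that the perturbation of the stochastic exponential is genuinely negligible — while the bridge occasionally makes excursions of size somewhat larger than $1/\sqrt\lambda$ — is the delicate estimate; a clean way around it is to use the heat-kernel form of the Main Theorem (mentioned in the remark after Corollary 1) so that excursions leaving the relevant neighborhood are automatically suppressed, and to phrase the hypothesis on $A_{\text{nonlin}}$ directly in terms of a smallness bound like $\|A_{\text{nonlin}}\|_{L^\infty(B_{C/\sqrt\lambda}(x_0))} \le \varepsilon/\sqrt\lambda$ for a suitable absolute $\varepsilon$. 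Once that is in place, the computation with the explicit $\theta/\sinh\theta$ factor is routine and yields the claimed polynomial bound on $|\curl A(x_0)|$.
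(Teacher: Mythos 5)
You take the same overall route as the paper: gauge away the conservative part via Lemma~\ref{lem:nice}, linearize $A$ at $x_0$, split $A_{\mbox{\tiny lin}}=A_1+A_2$ as in~\eqref{LinearSplitting}, absorb $A_1$ (and $\nabla\phi$) into a deterministic endpoint term via It\^o (using $\Delta f=\nabla\cdot A(x_0)=0$), and then feed the remaining random piece $\frac{\curl A(x_0)}{2}\int_0^t R(\omega(s)-x_0)\cdot d\omega(s)$ into the Main Theorem's inequality with $t\sim 1/\lambda$, controlling $A_{\mbox{\tiny nonlin}}$ as a small perturbation. Where you diverge is the key quantitative step of estimating that remaining random integral. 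The paper uses a rotational-invariance plus parabolic-scaling heuristic and asserts the spread is $\sim t^2$, which at $t\sim 1/\lambda$ yields $|\curl A(x_0)|\lesssim\lambda^2$. You instead recognize the integral as a L\'evy stochastic area of the Brownian bridge and invoke its explicit characteristic function (the $\theta/\sinh\theta$ formula), obtaining a spread $\sim t$ and hence the sharper $|\curl A(x_0)|\lesssim\lambda$. Your scaling is in fact the correct one: under $\omega(tu)=\sqrt{t}\,\tilde\omega(u)$ one gets $\int_0^t R\omega\cdot d\omega \overset{d}{=} t\int_0^1 R\tilde\omega\cdot d\tilde\omega$, a single factor of $t$, so your bound is tighter than (and of course implies) the stated $\lambda^2$; your speculation that the extra power of $\lambda$ must come from error bookkeeping is not needed. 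You also correctly flag the two places that genuinely need quantification — uniformity of the L\'evy-area control over the Gaussian-typical endpoints $y$, and controlling the effect of $A_{\mbox{\tiny nonlin}}$ along bridge excursions that may leave the $1/\sqrt\lambda$-ball — and your suggestions (heat-kernel form of the Main Theorem, a smallness hypothesis of the form $\|A_{\mbox{\tiny nonlin}}\|_{L^\infty(B_{C/\sqrt\lambda})}\le\varepsilon/\sqrt\lambda$) are reasonable ways to close them. The paper leaves these in the same informal state, simply requiring $\big|\int_0^t A_{\mbox{\tiny nonlin}}\cdot d\omega\big|\ll |\curl A(x_0)|/\lambda^2$, so your treatment is not less rigorous than the original; if anything it is more explicit and gives a stronger exponent.
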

This result is proven via an asymptotic expansion of the random path integral up to linear terms (hence also the restriction that the linear terms of the vector field dominate). It is not a priori clear whether this type of decomposition is optimal, and the problem of how to best utilize the main result for numerical prediction remains an interesting problem. Nonetheless, as illustrated by various examples throughout the paper, using sublevel sets of  $|\mathrm{curl}(A)|$ leads to very reasonable predictions of where eigenvectors early in the spectrum are likely to localize.

\subsection{A landscape inequality.} While all these arguments are presumably most powerful in the regime where $t$ is chosen to be small, $t \sim 1/\lambda$, there are averaging arguments one could employ. One such averaging argument leads to a refinement of the landscape inequality that again illustrates our main point. Applying the landscape inequality \cite{hoskins}, we deduce that any
eigenpair $(\lambda,\psi)$ of $H(A)$ satisfies
$$ |\psi(x)| \leq \lambda  \|\psi\|_{L^{\infty}} \, v(x) ~,$$
where $v$ is the classical torsion function, $-\Delta v = 1$.  Both $\psi$ and $v$ are assumed to satisfy Dirichlet boundary conditions.
This inequality does not involve the vector field $A$ (it only depends on the domain $\Omega$), so one would perhaps not expect it to be very informative in general.  A  notable exception is when the vector field $A$ is irrotational, in which case the inequality should be very accurate in predicting localization of the ground state, as seen in Lemma~\ref{lem:nice} below.
We derive a small refinement of this inequality. For the sake of simplicity of exposition, we shall abbreviate the stochastic path integral as
$$ \mathbb{E}_{x,y}(t) = \mathbb{E}_{\omega(0) = x_0, \omega(t) = y}  \exp\left( i  \int_{0}^{t} F \cdot d\omega(s) \right) \in \left\{z \in \mathbb{C}: |z| = 1\right\}. $$
The main result, by integrating over time, then leads to an improved landscape inequality, which is always at least as good as the original result.
\begin{corollary} \label{cor:3}
We have
$$ |\psi(x)| \leq \lambda  \|\psi\|_{L^{\infty}} \cdot \left[ \left(  \int_0^{\infty}  \int_{\Omega} |\mathbb{E}_{x,y}(t)|^2 \cdot p_t(x,y) dy \,dt\right)^{1/2}  \sqrt{v(x)} \right],$$
where the first integral can be bounded from above by
$$ \left(  \int_0^{\infty} \int_{\Omega}  |\mathbb{E}_{x,y}(t)|^2  p_t(x,y) dy \,dt\right)^{1/2} \leq \left(  \int_0^{\infty}  \int_{\Omega} p_t(x,y) dy \,dt\right)^{1/2} \leq \sqrt{v(x)}.$$
\end{corollary}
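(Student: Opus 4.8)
The plan is to combine the Feynman--Kac--It\^o representation of the magnetic heat semigroup already used to establish the Main Result with the elementary identity $\psi = \lambda\int_0^\infty e^{-tH(A)}\psi\,dt$ and a single application of the Cauchy--Schwarz inequality.

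First I would reduce to a purely solenoidal vector field: by the gauge invariance discussed in \S\ref{s:helm}, replacing $A$ by $A-\nabla\phi = F$ multiplies the eigenfunction by the unimodular factor $e^{i\phi}$, and hence leaves $|\psi|$, $\|\psi\|_{L^\infty}$ and $\lambda$ unchanged; so without loss of generality $A=F$ with $\div F = 0$. Next I would invoke the Feynman--Kac--It\^o formula for the Dirichlet magnetic Laplacian on $\Omega$: its heat kernel is $p_t^F(x,y) = \mathbb{E}_{x,y}(t)\,p_t(x,y)$, where $p_t$ is the Dirichlet heat kernel of $-\Delta$ on $\Omega$ and $\mathbb{E}_{x,y}(t)$ is the Brownian--bridge expectation of $\exp\!\big(i\int_0^t F\cdot d\omega(s)\big)$, the bridge being conditioned to remain in $\Omega$ (the sign of the exponent is irrelevant below). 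This is exactly the ingredient behind the Main Result in the variant noted in the remark following Corollary~\ref{cor:1}. Since $\psi$ is an eigenfunction, $e^{-tH(F)}\psi = e^{-\lambda t}\psi$, whence
$$ \psi(x) = \lambda\int_0^\infty \big(e^{-tH(F)}\psi\big)(x)\,dt = \lambda\int_0^\infty\int_\Omega \mathbb{E}_{x,y}(t)\,p_t(x,y)\,\psi(y)\,dy\,dt, $$
and all interchanges of integration are legitimate because $|\mathbb{E}_{x,y}(t)|\le 1$ and $\int_0^\infty\int_\Omega p_t(x,y)\,dy\,dt = v(x) < \infty$, so the integrand is dominated by $\|\psi\|_{L^\infty}\,p_t(x,y)$.

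Then I would take absolute values, use $|\psi(y)|\le\|\psi\|_{L^\infty}$, and apply Cauchy--Schwarz to the double integral with respect to the finite positive measure $p_t(x,y)\,dy\,dt$:
$$ |\psi(x)| \le \lambda\|\psi\|_{L^\infty}\int_0^\infty\!\!\int_\Omega |\mathbb{E}_{x,y}(t)|\cdot 1\cdot p_t(x,y)\,dy\,dt \le \lambda\|\psi\|_{L^\infty}\left(\int_0^\infty\!\!\int_\Omega |\mathbb{E}_{x,y}(t)|^2 p_t(x,y)\,dy\,dt\right)^{1/2}\!\!\left(\int_0^\infty\!\!\int_\Omega p_t(x,y)\,dy\,dt\right)^{1/2}, $$
and the last factor equals $\sqrt{v(x)}$ since $v(x)=\int_0^\infty (e^{t\Delta}1)(x)\,dt$ solves $-\Delta v = 1$ with Dirichlet data. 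This is the asserted inequality. The stated bound on the first factor is then immediate from $|\mathbb{E}_{x,y}(t)|^2\le 1$, which also shows the refined estimate is never worse than the original bound $|\psi(x)|\le\lambda\|\psi\|_{L^\infty}v(x)$.

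The Cauchy--Schwarz step and the bookkeeping with $v$ are routine. The only point that requires genuine care --- and the main obstacle --- is the Feynman--Kac--It\^o representation with the \emph{Dirichlet} heat kernel: one must ensure that the Brownian bridge in $\mathbb{E}_{x,y}(t)$ is the one killed upon hitting $\partial\Omega$, so that the boundary condition is correctly encoded in $p_t$, and that the It\^o line integral $\int_0^t F\cdot d\omega(s)$ is well defined along such bridges (here the hypothesis $\div F=0$ also removes the usual divergence correction term relating It\^o and Stratonovich integrals). Both facts are standard for $F\in C^1$ and smooth $\partial\Omega$, and are already implicitly used in the proof of the Main Result.
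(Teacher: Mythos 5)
Your proposal is correct and follows essentially the same route as the paper. The paper reaches the identity $\frac{1}{\lambda}\frac{|\psi(x)|}{\|\psi\|_{L^\infty}} \le \int_0^\infty\int_\Omega |\mathbb{E}_{x,y}(t)|\,p_t(x,y)\,dy\,dt$ by first establishing the pointwise-in-$t$ inequality (the heat-kernel variant of the Main Result) and then integrating over $t\in(0,\infty)$, whereas you collapse these two steps by writing $\psi=\lambda\int_0^\infty e^{-tH(F)}\psi\,dt$ directly and then inserting the Feynman--Kac--It\^o representation; the resulting inequality, the Cauchy--Schwarz step, and the identification of $\int_0^\infty\int_\Omega p_t(x,y)\,dy\,dt$ with the torsion function $v(x)$ (the paper via eigenfunction expansion, you via $v=\int_0^\infty e^{t\Delta}1\,dt$) are the same.
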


This inequality is more complicated, as it involves the heat kernel $p_t(x,y)$. One could, of course, again bound it from above by the Gaussian heat kernel in free space. There is no reason to believe that Corollary \ref{cor:3} is particularly useful for general $A = \nabla \phi + F$.
However, it is yet another way to illustrate our main point: one can improve on the landscape inequality in regions where $|\mathbb{E}_{x,y}(t)| \ll 1$.  These are the regions where the vector field is far from path-independent.

\subsection{The Helmholtz decomposition.} \label{s:helm} The idea that $\nabla \psi(x) \sim -i A(x) \psi(x)$ should be true over most of the domain does suggest a natural idea: some vector fields arise naturally as the gradient of a function (the conservative vector fields). This suggests performing a Helmholtz decomposition
\begin{align*}
 A(x) = \nabla \phi + F(x) \qquad \mbox{where} \quad \phi \in C^1(\Omega) \quad ~\mbox{and} ~\div(F) = 0.
\end{align*} 
As it turns out, the Helmholtz decomposition leads to a natural symmetry of the problem: the irrotational contribution $\nabla \phi$ to the vector field $A$ does not impact localization properties of the eigenfunction, it only leads to a modulation (multiplication by complex numbers with modulus 1). 

\begin{lemma} \label{lem:nice} Given the Helmholtz decomposition $A(x) = \nabla \phi + F(x)$, define the operator $\tilde H g = e^{-i \phi} H (e^{i \phi} g)$. It holds that $\tilde H = (-i \nabla -F)^2$.
Therefore, $H\psi = \lambda \psi$ if and only if $\tilde H(e^{-i \phi} \psi) = \lambda e^{-i \phi} \psi$.
\end{lemma}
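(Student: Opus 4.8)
The plan is to verify the operator identity by a direct intertwining computation and then read off the statement about eigenpairs. The heart of the matter is a single first-order calculation: conjugation by the unimodular factor $e^{i\phi}$ turns the magnetic momentum operator $-i\nabla - A$ into $-i\nabla - F$. Concretely, for a smooth test function $g$, the product rule gives $-i\nabla(e^{i\phi}g) = e^{i\phi}\bigl((\nabla\phi)g - i\nabla g\bigr)$ — the factor $-i$ hitting $e^{i\phi}$ produces a $+\nabla\phi$ — so that
$$ (-i\nabla - A)(e^{i\phi} g) = e^{i\phi}\bigl( (\nabla\phi) g - i\nabla g \bigr) - (\nabla\phi + F) e^{i\phi} g = e^{i\phi}\,(-i\nabla - F) g,$$
where the two $\nabla\phi$ terms cancel precisely because $A = \nabla\phi + F$. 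Equivalently, $e^{-i\phi}(-i\nabla - A)(e^{i\phi} g) = (-i\nabla - F) g$; this is the key identity.

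The second step is to square. Since $H(A) = (-i\nabla - A)^2$ means literally applying $-i\nabla - A$ twice, I would write
$$ \tilde H g = e^{-i\phi} (-i\nabla - A)\bigl( (-i\nabla - A)(e^{i\phi} g)\bigr) = e^{-i\phi}(-i\nabla - A)\bigl( e^{i\phi}\,(-i\nabla - F) g \bigr),$$
and apply the key identity a second time, now with $(-i\nabla - F)g$ in place of $g$, obtaining $\tilde H g = (-i\nabla - F)^2 g$. This establishes $\tilde H = (-i\nabla - F)^2$. For the equivalence of eigenpairs, set $g = e^{-i\phi}\psi$, so that $\psi = e^{i\phi} g$; then $\tilde H g = e^{-i\phi} H(e^{i\phi} g) = e^{-i\phi} H\psi$, which equals $\lambda e^{-i\phi}\psi = \lambda g$ exactly when $H\psi = \lambda\psi$, the reverse implication being the same chain of equalities read backwards. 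Finally, since $|e^{-i\phi}| \equiv 1$, the Dirichlet condition is preserved, $\psi\big|_{\partial\Omega} = 0 \iff (e^{-i\phi}\psi)\big|_{\partial\Omega} = 0$, so the correspondence is genuine at the level of Dirichlet eigenpairs.

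I expect no serious obstacle: the only points demanding care are bookkeeping — getting the product-rule signs right (the cancellation hinges on $-i$ acting on $e^{i\phi}$ giving $+\nabla\phi$), and noting that the hypothesis $\phi \in C^1(\Omega)$ is enough regularity for all the manipulations to make sense, and that multiplication by $e^{\pm i\phi}$ maps the form domain of $H$ onto the form domain of $\tilde H$, so the two operators really are unitarily equivalent rather than merely formally conjugate. One could phrase the argument more abstractly by saying that $-i\nabla$ transforms as a connection under the gauge change $\psi \mapsto e^{i\phi}\psi$, but the explicit two-line computation above is the cleanest route and makes the role of the Helmholtz splitting $A = \nabla\phi + F$ transparent.
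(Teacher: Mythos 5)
Your computation is correct and is exactly the ``direct computation'' the paper invokes without spelling out: the paper's proof of Lemma~\ref{lem:nice} consists of the single remark that it follows by direct computation, and your two-step intertwining argument (first showing $e^{-i\phi}(-i\nabla-A)e^{i\phi}=-i\nabla-F$, then squaring) is the natural way to carry it out. The additional observations about preservation of the Dirichlet condition and unitary equivalence are sound and slightly more explicit than the paper, but do not change the approach.
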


This lemma, whose proof follows by direct computation, has a number of implications. The most immediate is perhaps that, if one cares about localization, the contribution $\nabla \phi$ to the vector field is completely irrelevant.  One can always perform a Helmholtz decomposition and simply remove the contribution that is the gradient of a function.  This procedure has no impact on localization behavior of eigenfunctions, though it does have an impact on their modulation via $\psi \rightarrow e^{-i \phi} \psi$.
This allows us to rephrase the question from above.

 \begin{quote}
   \textbf{Problem} (simplified)\textbf{.}
   Given $F$ satisfying $\mbox{div}(F) = 0$, predict whether and where eigenfunctions of $H(F)$ may localize, based on $F$ and eigenvalues $\lambda$.
 \end{quote}

\subsection{Neumann boundary conditions} \label{s:neumann} The main result is formulated for eigenfunctions satisfying Dirichlet boundary conditions. The case of Neumann boundary conditions is virtually identical and, in practice, there is little difference between the two \textit{provided} the eigenfunction in question is localized inside the domain (and at least several wavelengths away from the boundary). This can also be seen from the proof: the proof uses the Feynman-Kac formula to yield a local reproducing identity of a parabolic nature. Such an argument is, by default, not localized away from the boundary but has the usual exponential decay at the scale of a wavelength. Thus, whatever actually happens at the boundary is of little concern. This, of course, changes dramatically when the eigenfunction is localized near the boundary, in which case a nontrivial type of interaction can occur (see, for example,  \cite{felix, jones}).

\begin{figure}[h!]
         \subfloat[{$\Re\psi$, $\lambda\approx 120.52$.}]
	{\includegraphics[width=0.48\textwidth]{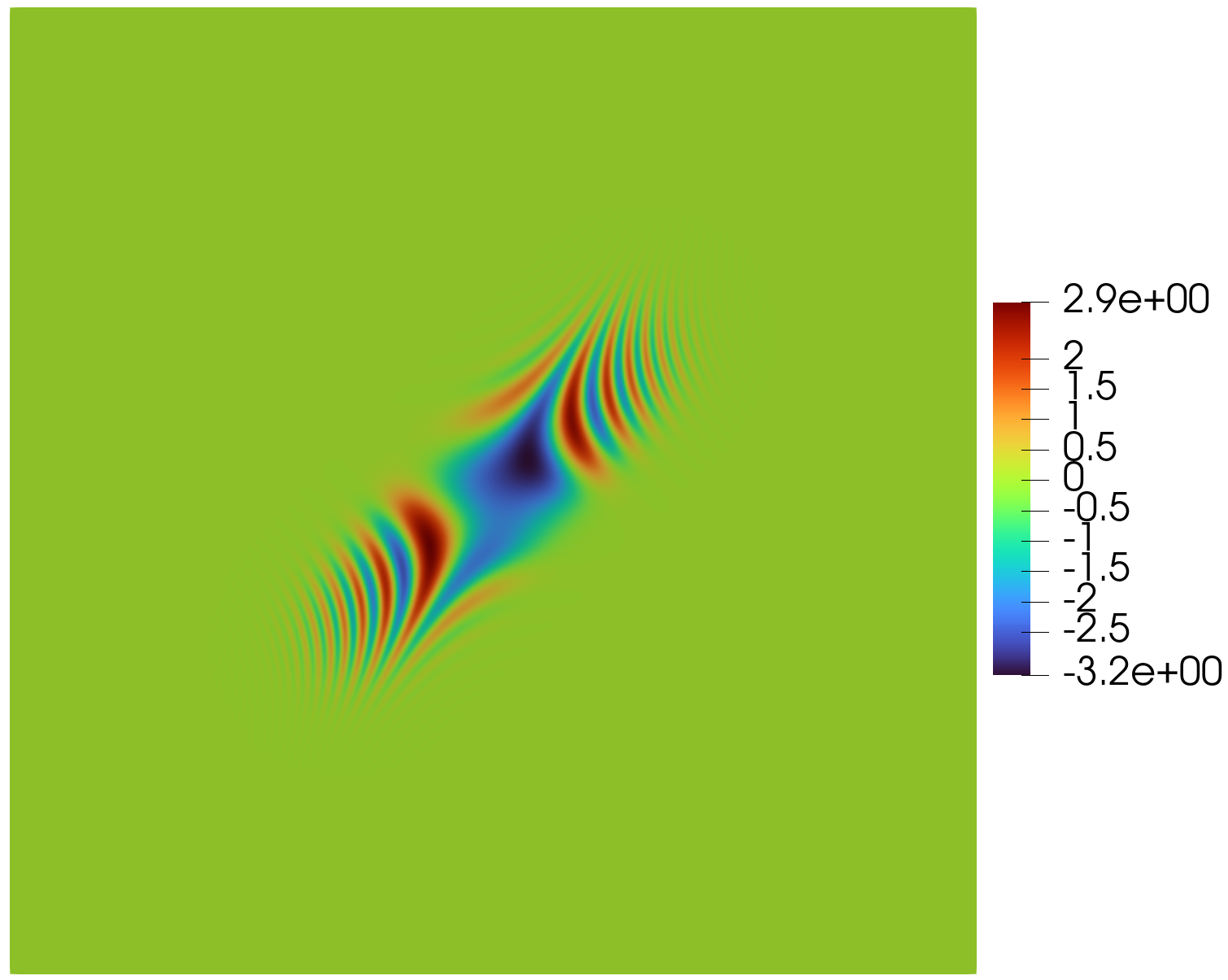}}
        \quad
 	\subfloat[{$\Im\psi$, $\lambda\approx 120.52$.}]
	{\includegraphics[width=0.48\textwidth]{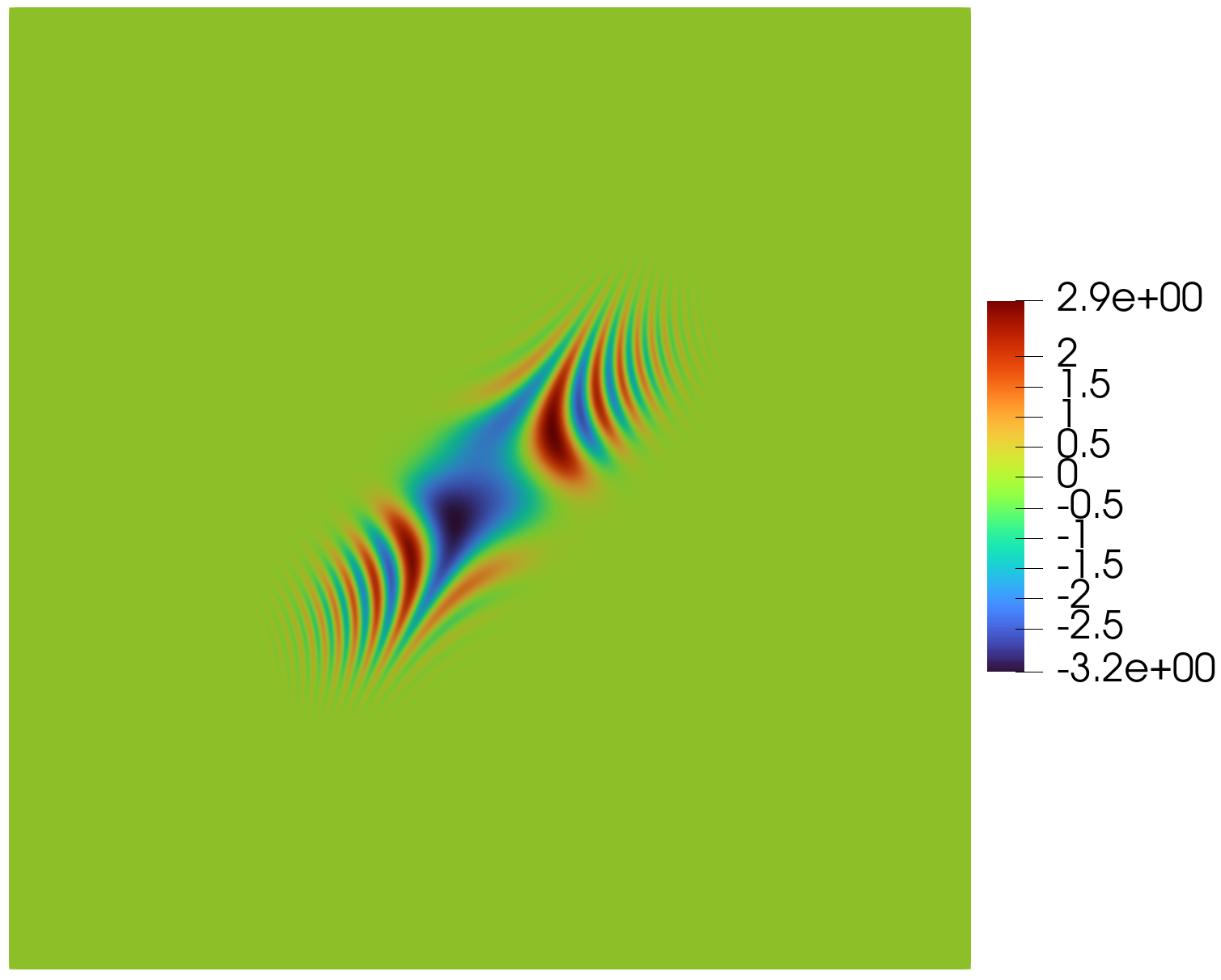}} 
        \caption{\label{SimpleFigB} $\Re\psi$ and $\Im\psi$ for the
          smallest eigenvalue. $A$ given by Example~\ref{Simple}, with $a=1000$, see \S \ref{s:numerics}.}
\end{figure}

\subsection{Gauge freedom and parallel transport}

That several of these results have conclusions which are stated modulo $2\pi \mathbb{Z}$ should also not be surprising, and are in fact suggested by the gauge freedom of the operator which Lemma \ref{lem:nice} implies. The origin of this gauge freedom lies in the fact that our magnetic Laplacian
\[ H(A) = \left(- i\nabla - A(x)\right)^2 \]
arises as a \emph{connection Laplacian} acting on sections of a complex line bundle $E$. A complex line bundle over a simply-connected domain $\Omega$ has a trivialization $E \simeq \Omega\times \mathbb{C}$, thus there is a non-vanishing smooth section $s_0: \Omega \to E$.  More specifically, given a function $f:\Omega \to \mathbb{C}$, the product $s_0f$ now defines a section of $E$, and every section arises this way. Similarly, the magnetic potential $iA$ (now viewed as a complex-valued 1-form on $\Omega$), defines a connection on $E$, $\nabla^A$, by the formula $\nabla^A(s_0 f)=s_0\,(d+iA)f$, where $d$ is the exterior derivative. The connection Laplacian defined by $\nabla^A$ is precisely our 
magnetic Laplacian operator,
\[ \Delta_A := (\nabla^A)^*\nabla^A = \left(- i\nabla - A(x)\right)^2 \]
acting on sections of $E$ (equivalently on complex-valued functions).
This formalism can seem cumbersome, but it does clarify the relationship between the integral $\int_0^t A(\omega(t))\cdot d\omega(s)$ and $H(A)$. For each $p\in \Omega$ and closed loop $\gamma$ based at $p$, a choice of connection $\nabla^A$ defines an endomorphism $\mathcal{P}_\gamma^A: \mathbb{C}\to \mathbb{C}$ of the fiber of $E$ at $p$, known as the \emph{parallel transport map}. For the connection $\nabla^A=d+iA$, an easy computation shows that
\[ \mathcal{P}_\gamma^A = e^{i\oint_\gamma A}  \]
for the loop $\gamma$. We note that any connection is also determined by its parallel transport map evaluated on all smooth loops based at a point \cite{kobayashi}.
In particular, this relates parallel transport to the gauge freedom Lemma \ref{lem:nice} illustrates. Namely our connections are gauge-equivalent, $e^{-i\phi}(d+iA_1)e^{i\phi}=(d+iA_2)$, if and only if $A_1-A_2=d\phi$. Now because $A_1-A_2$ and $d\phi$ define the same connection, this is equivalent to these connections defining the same parallel transport maps, thus
\[ \mathcal{P}_\gamma^{A_1-A_2} = \mathcal{P}_\gamma^{d\phi} = e^{i\oint_\gamma d\phi} = 1,  \]
for all closed loops $\gamma$, by Stokes theorem. In particular we see that that $A_1, A_2$ define gauge equivalent connections in this setting if and only if
\[ \oint_\gamma A_1-A_2 \in 2\pi \mathbb{Z} . \]
Note that we have implicitly used the fact that our domain $\Omega$ is simply connected; for non-simply connected domains the connection may not take the form $d+iA$ globally and the corresponding parallel transport map will change in turn. 


\begin{figure}[h!]
  \centering
	\subfloat[{$A$.}]
	{\includegraphics[width=0.35\textwidth]{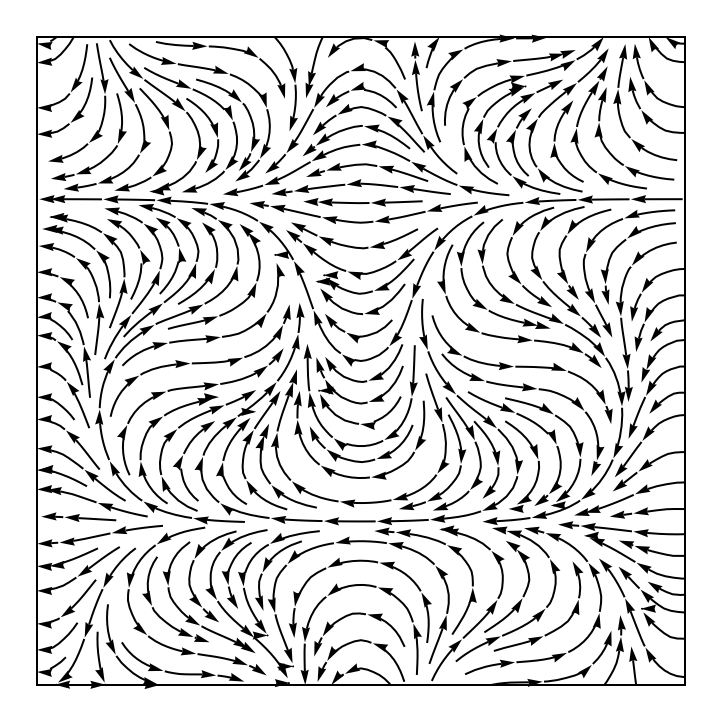}}
        \quad
 	\subfloat[{$|\curl A|$.}]
	{\includegraphics[width=0.39\textwidth]{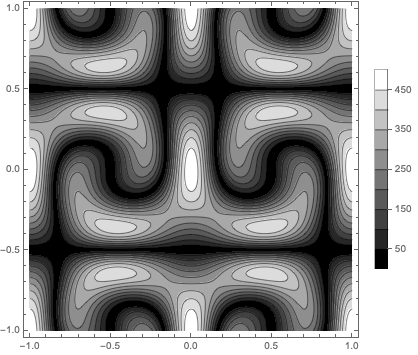}}\\
        \subfloat[{ $\lambda\approx 62.030$.}]
	{\includegraphics[width=0.30\textwidth]{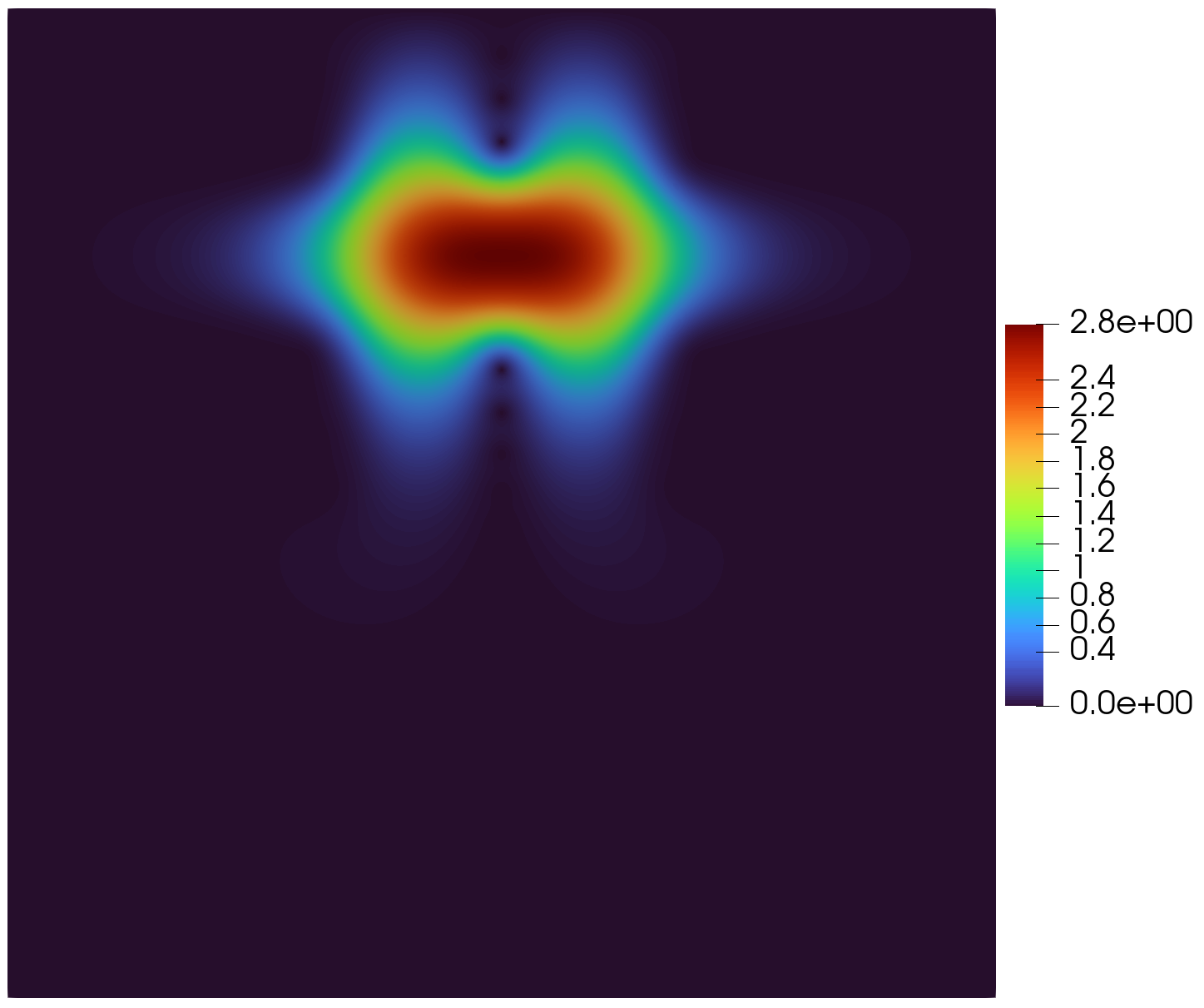}}
         \quad
         \subfloat[{$\lambda\approx 85.610$.}]
 	{\includegraphics[width=0.30\textwidth]{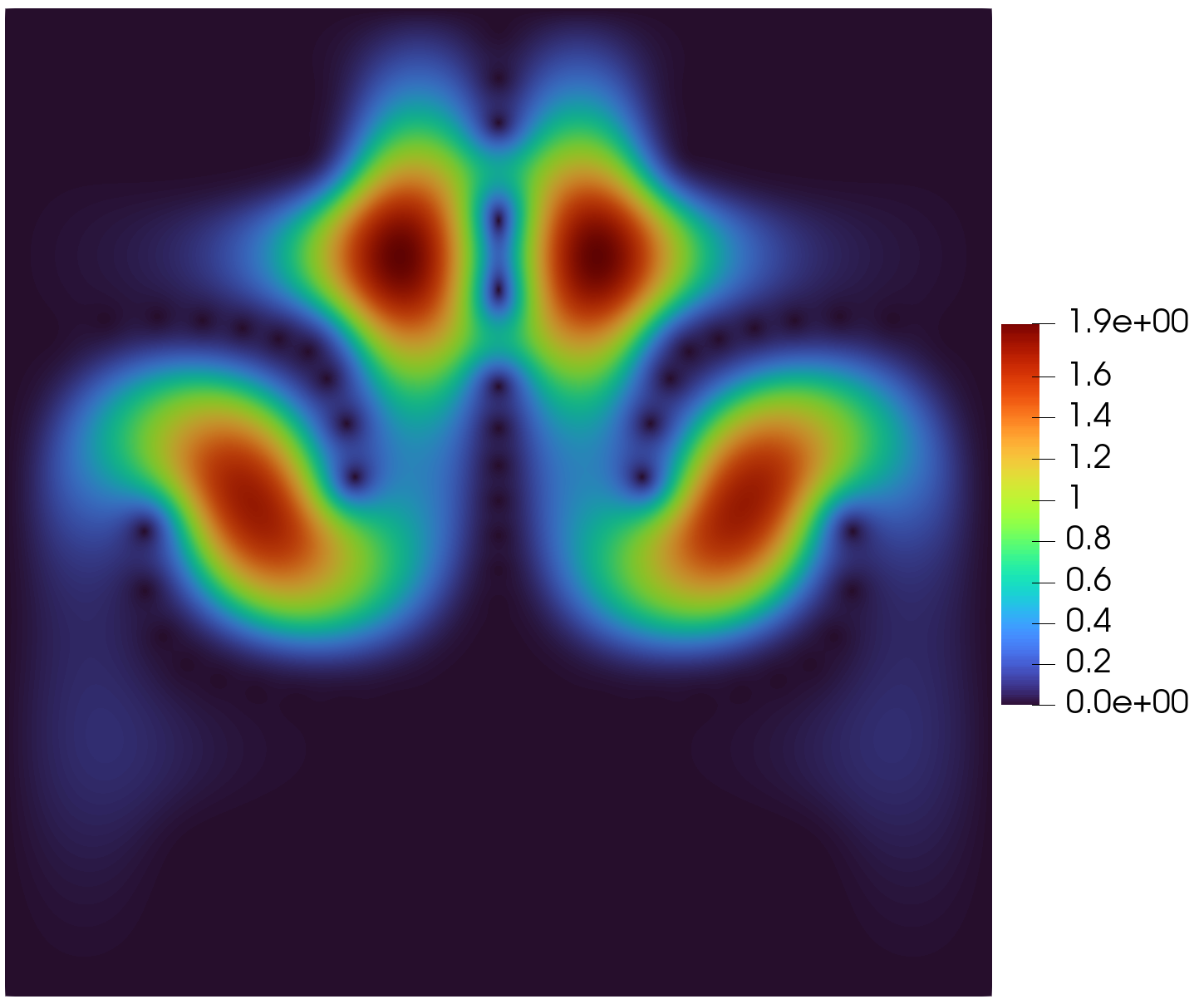}}
        \quad
  	\subfloat[{$\lambda\approx 85.875$.}]
 	{\includegraphics[width=0.30\textwidth]{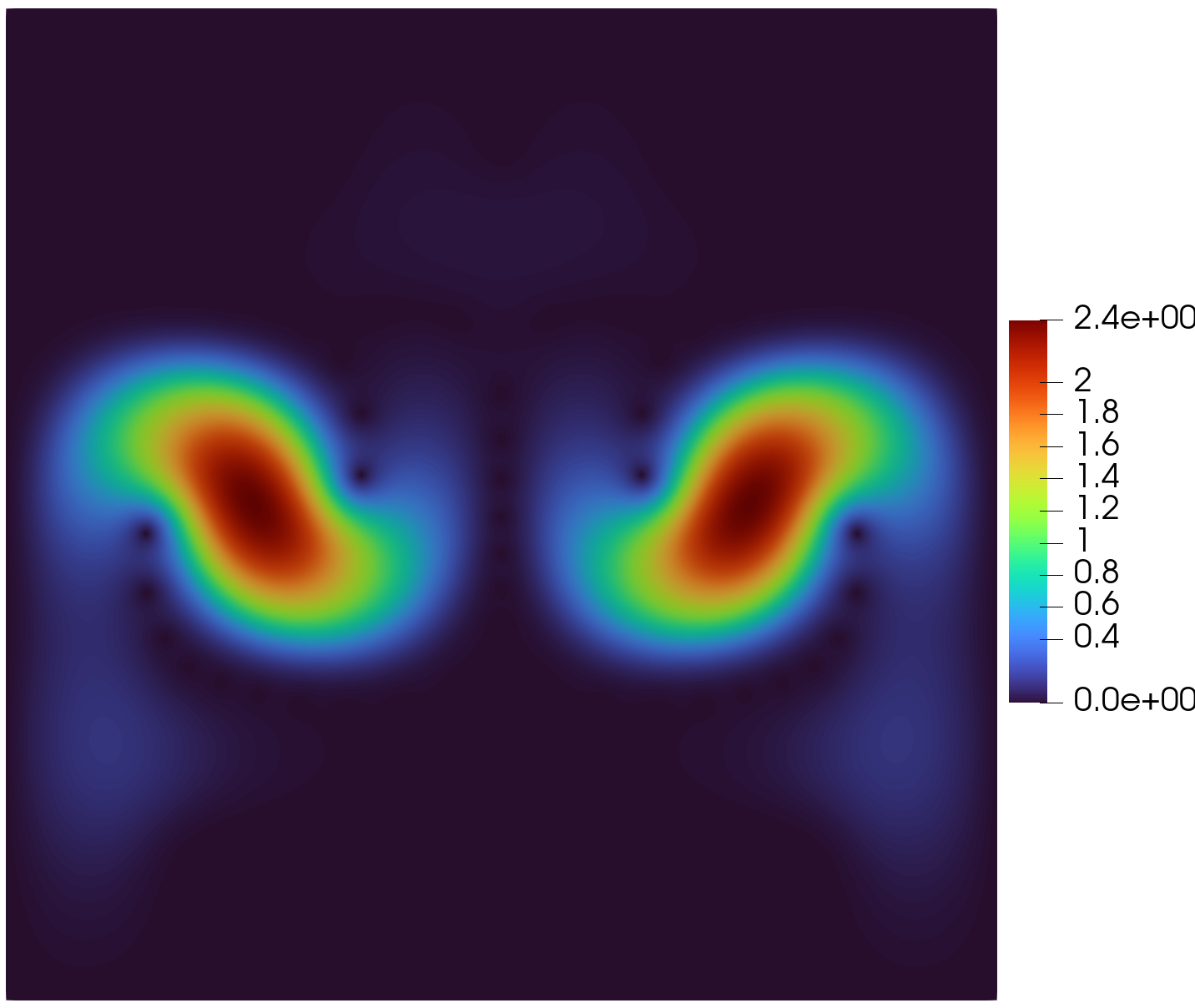}}\\
        \subfloat[{$\lambda\approx 86.150$.}]
	{\includegraphics[width=0.30\textwidth]{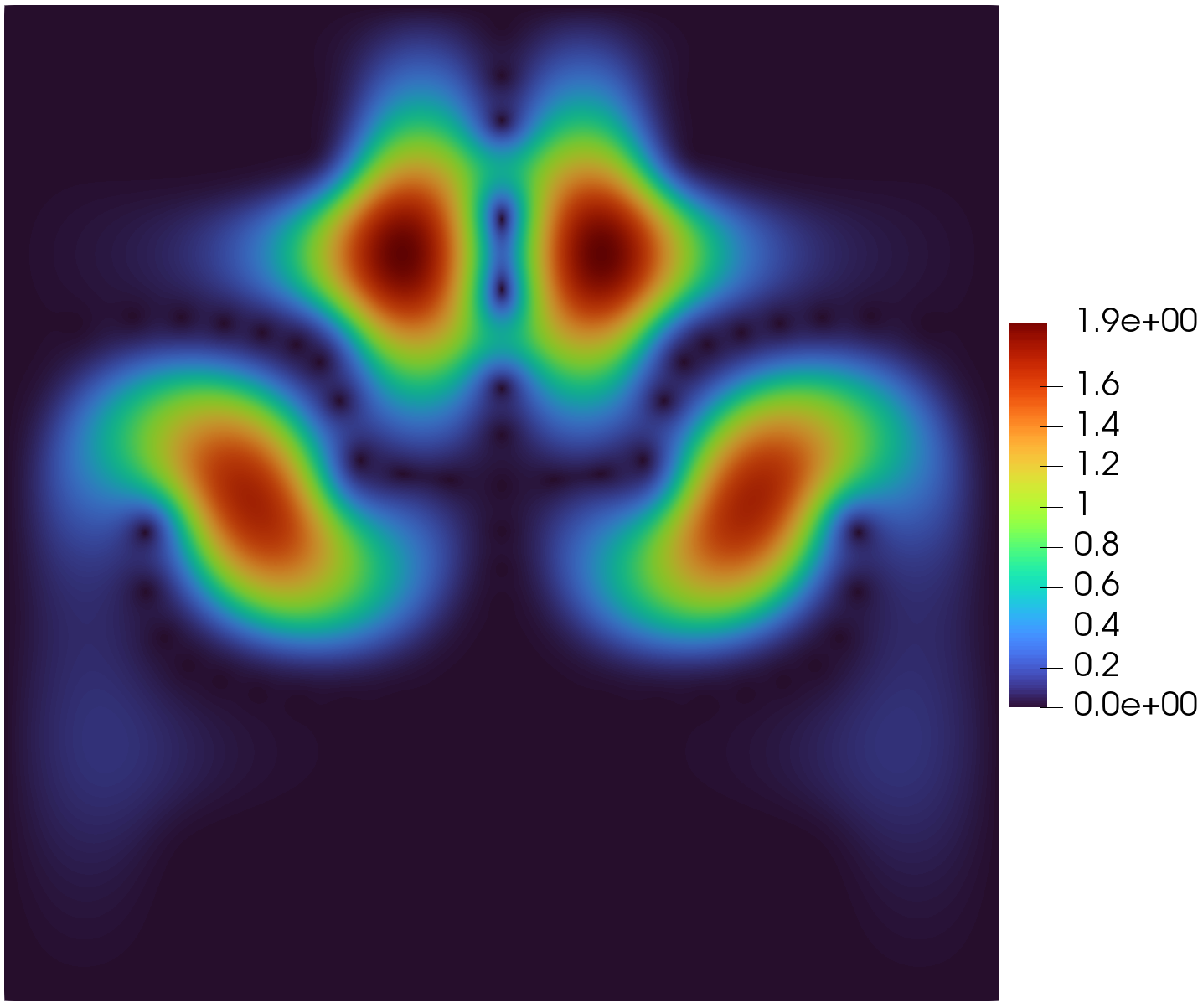}}
         \quad
         \subfloat[{ $\lambda\approx 89.936$.}]
 	{\includegraphics[width=0.30\textwidth]{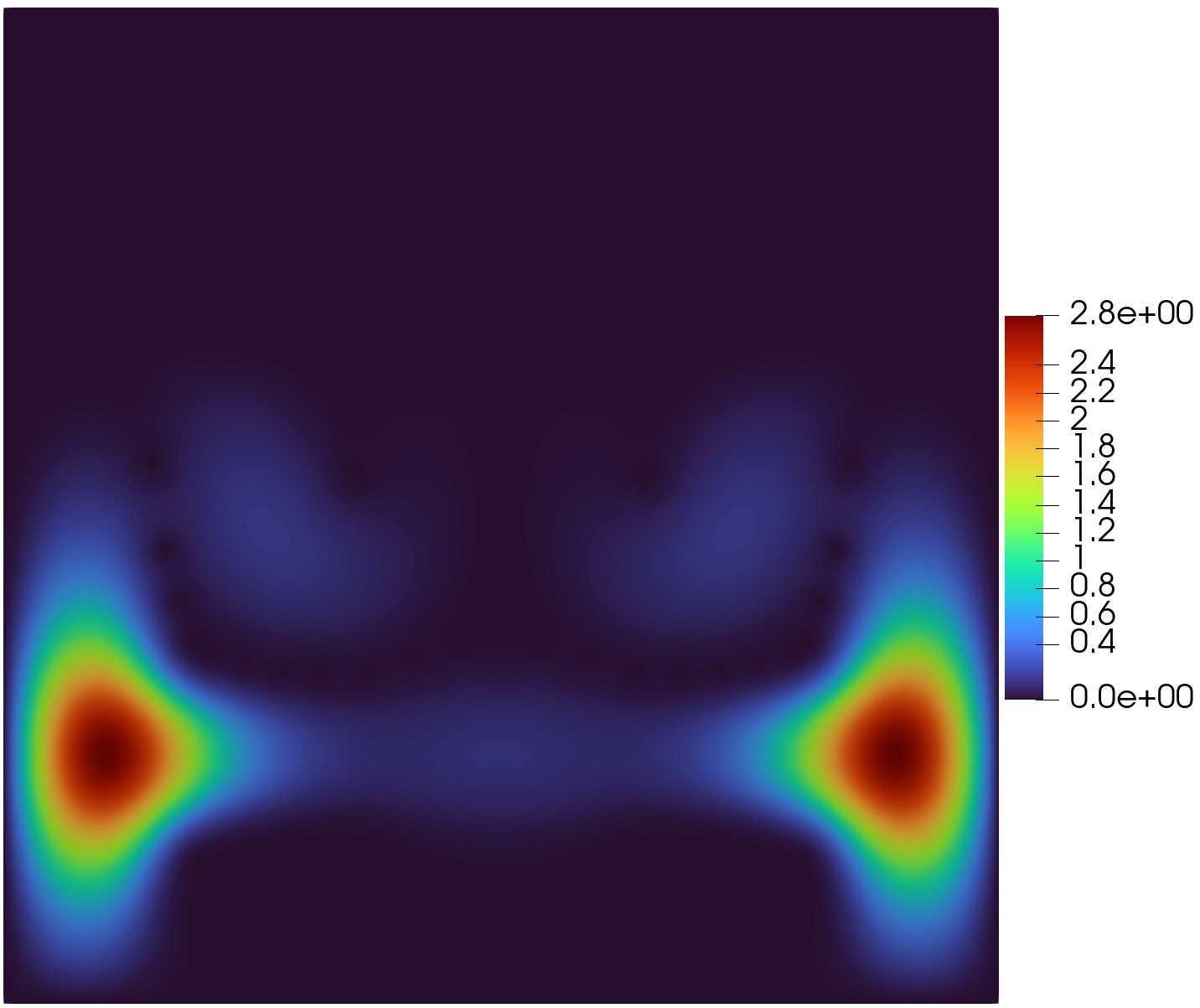}}
        \quad
  	\subfloat[{$\lambda\approx 89.949$.}]
 	{\includegraphics[width=0.30\textwidth]{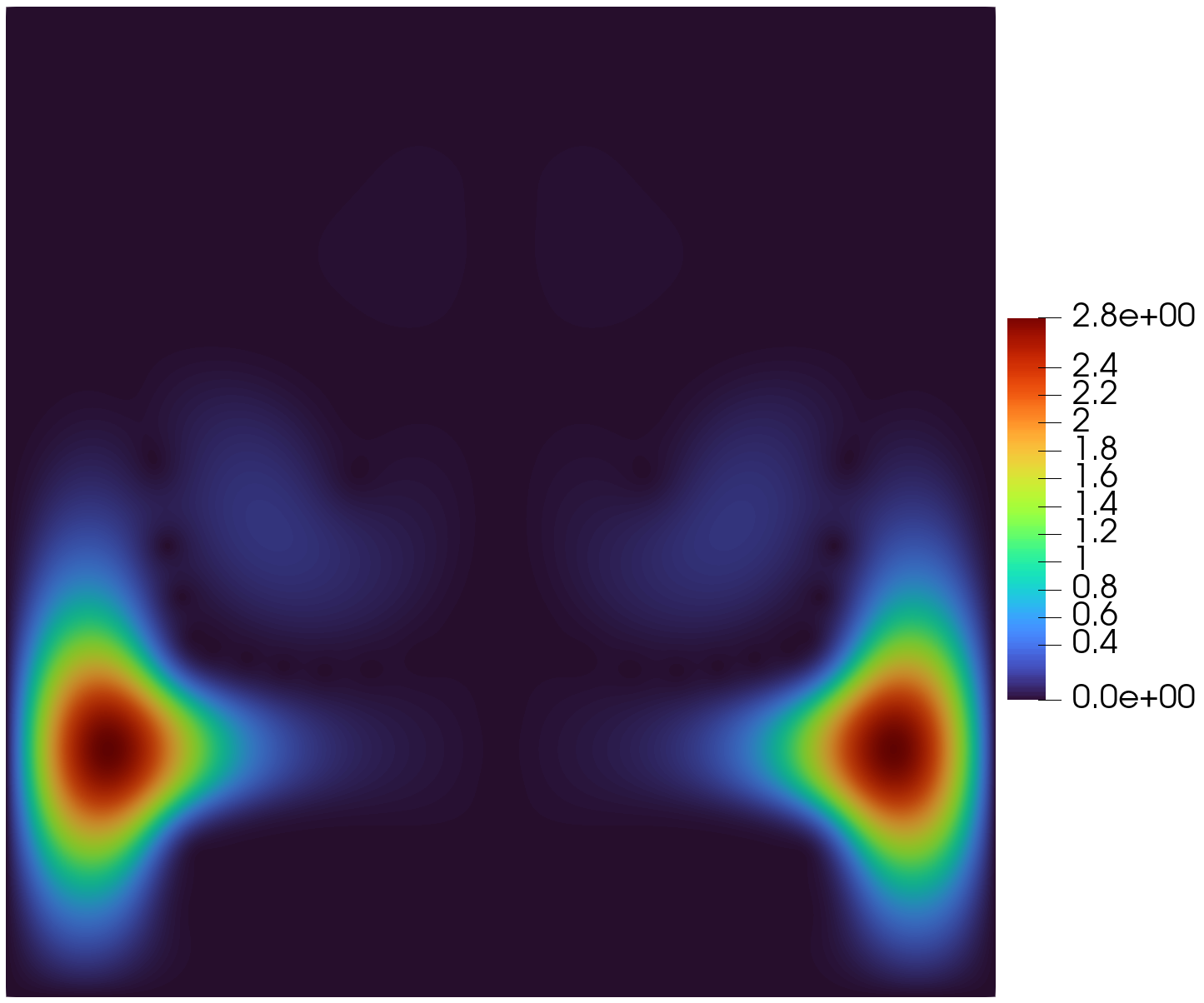}}\\
        \subfloat[{$\lambda\approx 99.499$.}]
	{\includegraphics[width=0.30\textwidth]{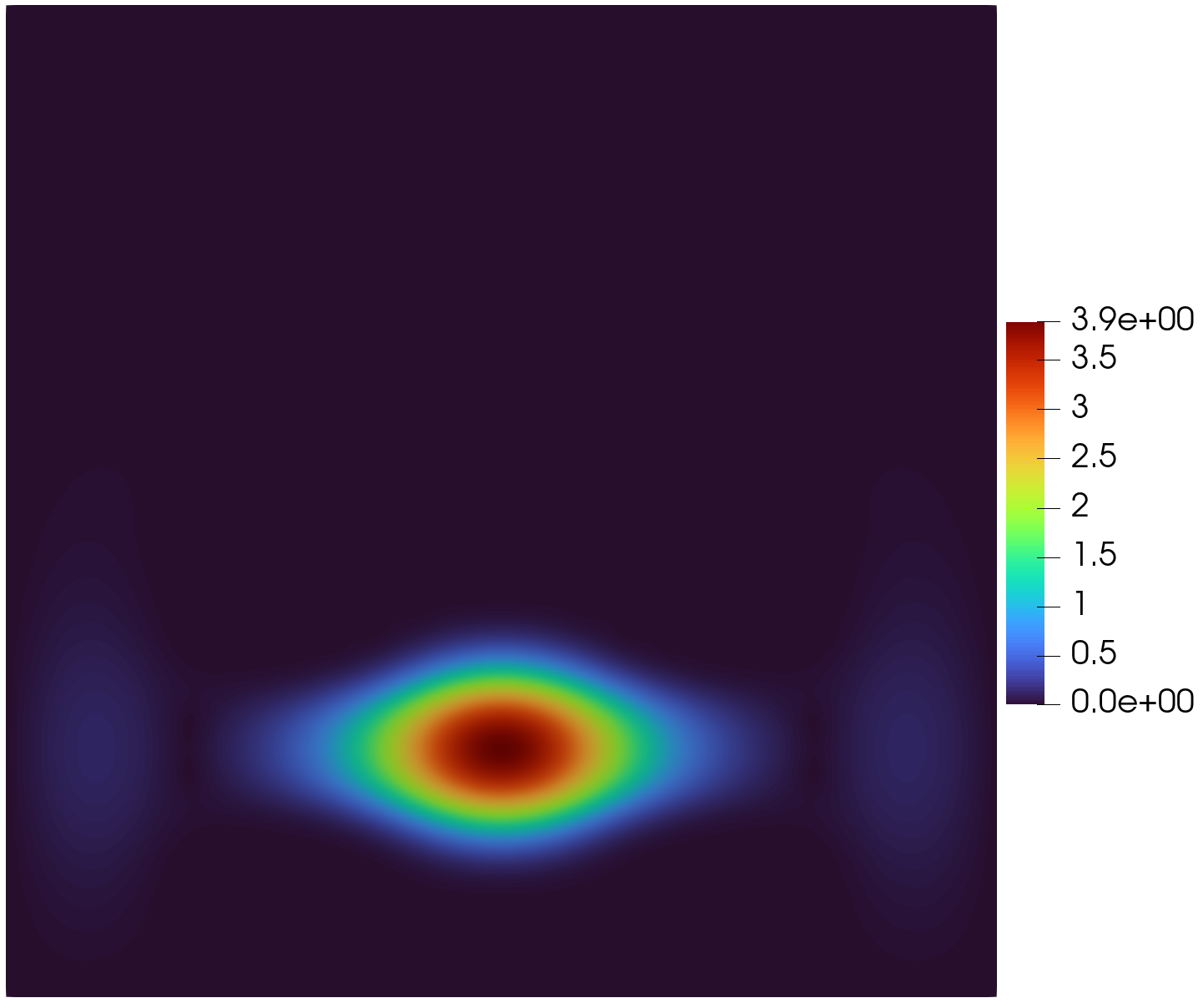}}
         \quad
         \subfloat[{$\lambda\approx 111.91$.}]
 	{\includegraphics[width=0.30\textwidth]{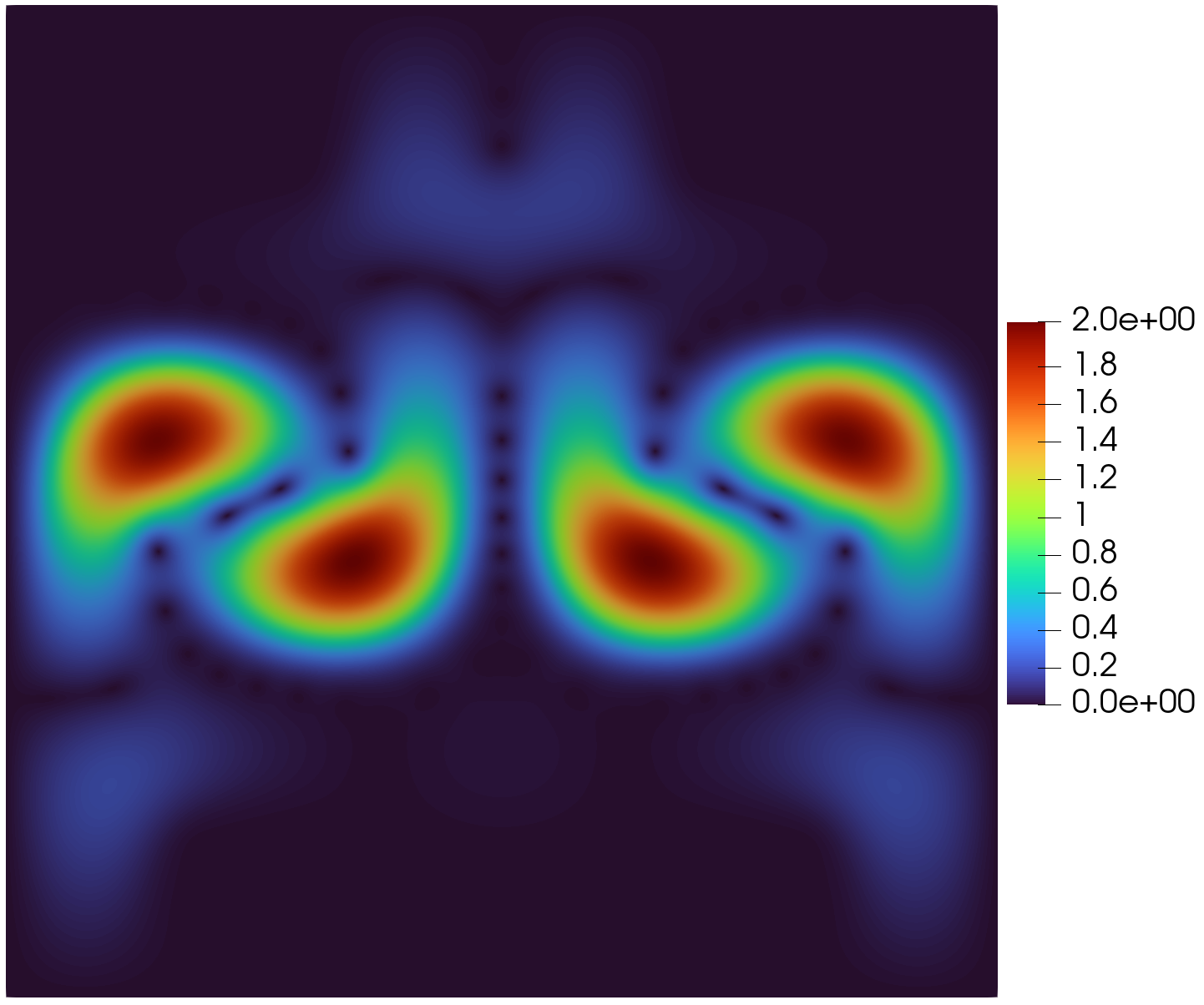}}
        \quad
  	\subfloat[{$\lambda\approx 111.95$.}]
 	{\includegraphics[width=0.30\textwidth]{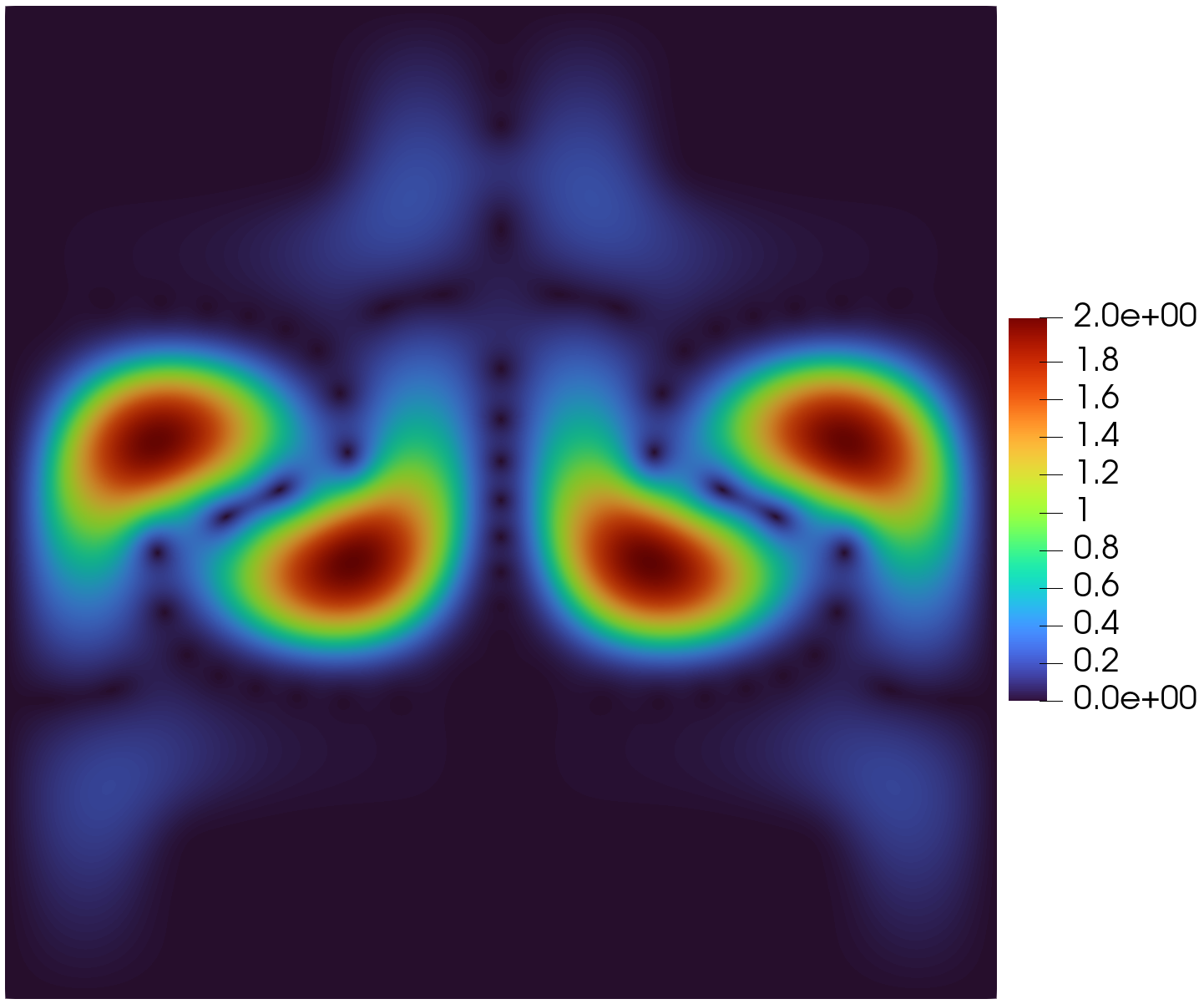}}        
        \caption{\label{Example5Fig} $A$, $|\curl A|$ and $|\psi|$
  for smallest nine eigenvalues. $A$ given by Example~\ref{Example4}, with $a=50$, see \S \ref{s:numerics}.}
\end{figure}

\subsection{Numerical Illustrations}\label{s:numerics}
We illustrate the heuristic that eigenfunctions of $H(A)$ corresponding to small eigenvalues
tend to localize in regions where the curl of $A$ is relatively small. We show a few such examples on the domain $\Omega=(-1,1)^2$.
Numerical experiments are conducted using Pythonic
FEAST~\cite{Gopalakrishnan2017}, which builds
on the general purpose finite element software package
NGSolve~\cite{Schoeberl2014,Schoeberl2021}.
Pythonic FEAST employs the FEAST algorithm
(cf.~\cite{Gopalakrishnan2019,Gopalakrishnan2020,Polizzi2009,Tang2014}) for solving
eigenvalue problems.  Modifications of the base algorithm allow for
operators of the type considered here.  The finite element spaces we employ
consist of continuous, piecewise cubic polynomials on regular
triangulations of $\Omega$ having characteristic edge length $h=0.01$.

\begin{example}[A simple polynomial vector field]\label{Simple}
Let $A =-a(x^2+y^2,x^2-y^2)$ with $a$ being a real parameter.  Then 
\begin{align*}
\curl A&=-2a(x-y)~.
\end{align*}
A stream plot of $A$, together with plots of $|\curl A|$ and
the modulus of a few eigenvectors are given in Figure~\ref{SimpleFig}, for the choice
$a=1000$.
The real and imaginary parts of $\psi$ for $\lambda\approx 120.52$ are
shown in Figure~\ref{SimpleFigB}, and illustrate how even the ground
state can be oscillatory in its components.
\end{example}

A recurring theme of the paper is that eigenfunctions localize in places where the vector field behaves approximately like a conservative vector field over short scales given by the wavelength. This naturally suggests that for typical vector fields $A(x,y)$ there is a predilection to localize in regions where $\|A(x,y)\|$ is small (relative to its size in other regions). This can be observed in practice. In order to be able to better observe other more subtle effects (such as the impact of the curl), we consider vector fields of the form 
$$  A=-a(\cos( f(x,y))\,,\,  \sin( f(x,y))),$$
where $a > 0$ is some constant and $f(x,y)$ is an arbitrary real-valued function. We have that $\|A\| = a$ is of constant size throughout.  

\begin{example}[A vector field with constant modulus]\label{Example3}
 We take the vector field $A=-a(\cos f(x,y)\,,\,
  \sin f(x,y))$ with $f(x,y)=5\pi\sin(x^2+y^2)$ and $a=50$.
  we plot $A$ and $|\curl A|$ together with the modulus of
  several eigenvectors, in Figure~\ref{Example3Fig}.  
\end{example}

\begin{example}[Another vector field with constant modulus]\label{Example4}
  We use the same type of vector field as in Example~\ref{Example3},
  but with $f(x,y)=\pi\sin(\pi x)\cos(\pi y)$ and $a=50$.  We plot $A$ and $|\curl A|$ together with the modulus of
  several eigenvectors in Figure~\ref{Example5Fig}.
\end{example}

\subsection{The full magnetic Schr\"odinger operator.} Although not emphasized in the paper, our main arguments extend to the full magnetic Schr\"odinger operator
$$ H(A,V)\psi = \left(- i\nabla - A(x)\right)^2 \psi(x) + V(x)\psi(x).$$ 
This operator also admits a Feynman-Kac formula (see \cite{broderix})
$$ \left[e^{-t H(A,V)} \phi\right](x) = \mathbb{E}_{\omega}\left( e^{-S_{t}(A,V|\omega)} \chi_{\Omega}(\omega,t) \phi(\omega(t)) \right),$$
where the expectation is taken over all Brownian motion $\omega$ started at $x$, and where 
$$\chi_{\Omega}(\omega,t)  = \begin{cases} 1 \qquad &\mbox{if}~\forall ~0\leq s \leq t: \omega(s) \in \Omega \\ 0 \qquad &\mbox{otherwise} \end{cases}$$
measures whether the Brownian motion has left the domain. The quantity $S_{t}(A,V|\omega)$ is defined via the formula
\begin{align*}
 S_{t}(A,V|\omega) &= i \int_0^{t} A(\omega(s)) \cdot d\omega(s) + \frac{i}{2} \int_0^{t} (\nabla \cdot A)(\omega(s)) ds \\
 &+ \int_0^{t} V(\omega(s)) ds.
\end{align*}
The same formula applies in our setting where it simplifies: $\nabla \cdot A$ vanishes through the Helmholtz decomposition and $V \equiv 0$. The general case admits a similar inequality as the main Theorem. However, there is a significant increase in complexity coming from the additional interplay between the vector field $A$ and the potential $V$. Nonetheless, we emphasize that any type of numerical method based on this path integral localization for magnetic Laplacians may admit, via this more general Feynman-Kac formula, a natural extension to the full magnetic Schr\"odinger operator; we consider this a promising avenue for future work.

\subsection{Related results}  To the best of our knowledge, these results are these first of their type. However, there are a number of existing results in the literature that share philosophical similarities. We especially emphasize the results of B. Poggi \cite{poggi} and Z. Shen \cite{shen}: in both papers, the curl of the magnetic field plays a significant role. While the curl also arises in our approach, it does so implicitly as a first order approximation -- in particular, we note that the nonlocal nature of our formulation is consistent with the Aharonov-Bohm effect.  There is also a rich semiclassical literature concerned with the magnetic Laplacian; we refer to Dimassi--Sj\"ostrand \cite{dim}, Helffer \cite{helffer}, Helffer-Sj\"ostrand \cite{helff2, helff3}. In particular, the curl appears naturally in the semiclassical limit \cite{ helffer-mohamed, helffer-mohamed2, helff4, mat, montgomery}.

\section{Proof of the Theorem}



\begin{proof}[Proof of the Theorem] We are given the operator
$$ H(A) \psi =  \left(- i\nabla - A(x)\right)^2 \psi~,$$
and consider eigenfunctions subject to Dirichlet boundary conditions $\psi\big|_{\partial \Omega} = 0$.
$H(A)$ is a self-adjoint operator with a real spectrum. We note that
$$ \left\langle H(A) \psi, \psi \right\rangle = \int_{\Omega} \left| \left(- i\nabla - A(x)\right) \psi \right|^2 dx \geq 0~,$$
so all eigenvalues are nonnegative. Under minimal assumptions on $A$ there is a Feynman-Kac formula (see Broderix, Hundertmark, Leschke \cite{broderix}).
Such assumptions are that $\nabla\cdot A$ and $A \cdot A$ are in the Kato class, so all differentiable vector fields are allowed.
This allows to rewrite the evolution operator as
$$ \left[e^{-t H(A)} \psi\right](x) = \mathbb{E}\left( e^{-S_t(A|\omega)} \chi_{\Omega}(\omega,t) \psi(\omega(t)) \right),$$
where the expectation is taken over all Brownian motion started at $x$.  Here, 
$$\chi_{\Omega}(\omega,t)  = \begin{cases} 1 \qquad &\mbox{if}~\forall ~0\leq s \leq t: \omega(s) \in \Omega \\ 0 \qquad &\mbox{otherwise} \end{cases}$$
measures whether the Brownian motion has left the domain, and 
\begin{align*}
 S_t(A|\omega) &= i \int_0^t A(\omega(s)) \cdot d\omega(s) + \frac{i}{2} \int_0^t (\nabla \cdot A)(\omega(s)) ds. 
\end{align*}
This representation will be very useful: since $\psi$ is an eigenfunction, the evolution operator is very simple and
$$  \left[e^{-t H(A)} \psi\right](x) = e^{-\lambda t} \psi(x).$$
At this point, we perform a Helmholtz decomposition
\begin{align*}
 A(x) = \nabla \phi + F(x) \qquad \mbox{where} \quad \phi \in C^1(\Omega) \quad ~\mbox{and} ~\div(F) = 0
\end{align*} 
and apply Lemma \ref{lem:nice} to instead consider the operator $ H(F) \psi_2 =  \left(- i\nabla - F(x)\right)^2 \psi_2$. This operator has an eigenfunction $\psi_2 = e^{- i \phi} \psi$ corresponding to the same eigenvalue $\lambda$. In particular, $|\psi(x)| = |\psi_2(x)|$ and they both localize in the same spot $x_0 \in \Omega$.
The stochastic integral for this divergence-free vector field simplifies to
\begin{align*}
 S_t(F|\omega) &= i \int_0^t F(\omega(s)) \cdot d\omega(s) 
\end{align*}
Note that this expression is purely imaginary because the integral is always real. At this point, the representation formula has been simplified to
$$ \psi_2(x) = e^{\lambda t}  \left[e^{-t H(F)} \psi_2\right](x) = e^{\lambda t} \cdot  \mathbb{E}\left( e^{-S_t(F|\omega)} \chi_{\Omega}(\omega,t)\psi_2(\omega(t)) \right).$$
We will now apply this formula at the point $x_0 \in \Omega$ where the eigenfunction attains its maximum, $|\psi_2(x_0) | = |\psi(x_0)| = \|\psi\|_{L^{\infty}} = \|\psi_2\|_{L^{\infty}}$. 
We condition the expectation on $\omega(t) = y$. We have complete control over the likelihood of the probability distribution of Brownian particles starting at $x_0$ and running for $t$ units of time without ever leaving the domain: this is the heat kernel, $y \rightarrow p_t(x_0, y)$. Thus,
$$ \mathbb{E} \left( e^{-S_t(F|\omega)} \chi_{\Omega}(\omega,t)\psi_2(\omega(t)) \right) = \int_{\Omega} \left(  \mathbb{E}_{\omega(0)=x_0,\omega(t) =y} e^{-S_t(F|\omega)}  \right)  \psi_2(y) p_t(x_0,y) dy.$$
This identity will be useful in the proof of Corollary \ref{cor:3}. Altogether, recalling that $|\psi_2(x_0)| = \| \psi\|_{L^{\infty}}$, we have
$$ e^{-\lambda t} \| \psi\|_{L^{\infty}} = \left| \mathbb{E}_{}\left( e^{-S_t(F|\omega)} \chi_{\Omega}(\omega,t)\psi_2(\omega(t)) \right)\right|~,$$
and we bound its absolute value from above by
\begin{align*}
 e^{-\lambda t} \| \psi\|_{L^{\infty}} &= \left|   \int_{\Omega} \left(  \mathbb{E}_{\omega(0)=x_0,\omega(t) =y} e^{-S_t(F|\omega)}  \right)  \psi_2(y) p_t(x_0,y) dy \right| \\
 &\leq \|\psi_2\|_{L^{\infty}} \int_{\Omega}  \left| \left[ \mathbb{E}_{\omega(0)=x_0,\omega(t) =y} e^{-S_t(F|\omega)} \right] \right|  p_t(x_0,y) dy.
\end{align*}
This implies
$$ \int_{\Omega}  \left| \left[  \mathbb{E}_{\omega(0)=x_0,\omega(t) =y} e^{-S_t(F|\omega)} \right] \right|  p_t(x_0,y) dy \geq  e^{-\lambda t}.$$
As a final step, we can remove the dependency on the heat kernel by using the comparison bound with the Euclidean heat kernel,
$$ 0 \leq p_t(x_0,y) \leq  \frac{1}{(4 \pi t)^{d/2}} e^{- \frac{\|x_0-y\|^2}{4t}}~,$$
which is valid for any $\Omega \subset \mathbb{R}^d$.
This proves the main result. \end{proof}

We note that the last step of the argument, replacing the heat kernel by the free heat kernel is extremely accurate for small values of $t$, where small means $t \ll d(x,\partial \Omega)^2$ and `extremely accurate' means that the errors are exponentially small.

\section{Proof of Corollary \ref{cor:1}}

\begin{lemma} \label{lem:mini}
Let $X$ be a real-valued random variable and suppose that
$$ \sup_{z \in \mathbb{T}} \mathbb{P}\left(  \left| X \mod 2\pi  - z \right| \leq \frac{1}{100} \right) \geq \frac{99}{100}.$$
Then, for some universal constant $0 \leq c_1 < 1$,
$$ \left| \mathbb{E} \exp(i X) \right| \leq c_1.$$
\end{lemma}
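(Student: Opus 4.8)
Before the argument, note that the inequality in the hypothesis must be reversed: as printed it is satisfied by $X\equiv 0$, for which $|\mathbb{E}\exp(iX)|=1$, so no universal $c_1<1$ can exist. The statement that is true — and the one that feeds, by contraposition, into the proof of Corollary~\ref{cor:1} — is: if $\sup_{z\in\mathbb{T}}\mathbb{P}(|(X\bmod 2\pi)-z|\le 1/100)\le 99/100$, i.e. if no arc of radius $1/100$ on the circle carries more than $99\%$ of the mass of $X\bmod 2\pi$, then $|\mathbb{E}\exp(iX)|\le c_1$ for a universal $c_1<1$. The plan is a short, direct proof of this version.

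Since $\exp(iX)$ depends only on $X\bmod 2\pi$, I would let $\mu$ be the law of $X\bmod 2\pi$ on $\mathbb{T}=\mathbb{R}/2\pi\mathbb{Z}$ and put $m:=\mathbb{E}\exp(iX)=\int_{\mathbb{T}}e^{i\theta}\,d\mu(\theta)$; the hypothesis then says that every arc $\{\theta:\operatorname{dist}_{\mathbb{T}}(\theta,z)\le 1/100\}$ has $\mu$-measure at most $99/100$. If $m=0$ there is nothing to prove, so assume $m\ne 0$ and set $\phi_0=\arg m$. Then $|m|=\Re\bigl(e^{-i\phi_0}m\bigr)=\int_{\mathbb{T}}\cos(\theta-\phi_0)\,d\mu(\theta)$, so that
$$1-|m|=\int_{\mathbb{T}}\bigl(1-\cos(\theta-\phi_0)\bigr)\,d\mu(\theta).$$

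The integrand is nonnegative, so I discard the part of the integral over the arc $I=\{\theta:\operatorname{dist}_{\mathbb{T}}(\theta,\phi_0)\le 1/100\}$ centered at $\phi_0$. On the complement one has $\operatorname{dist}_{\mathbb{T}}(\theta,\phi_0)>1/100$, and since $s\mapsto 1-\cos s$ is increasing on $[0,\pi]$ this gives $1-\cos(\theta-\phi_0)\ge 1-\cos(1/100)$ there; hence $1-|m|\ge (1-\cos(1/100))\,\mu(\mathbb{T}\setminus I)$. Now $I$ is an arc of radius $1/100$, so the hypothesis applied at the single center $z=\phi_0$ gives $\mu(I)\le 99/100$, i.e. $\mu(\mathbb{T}\setminus I)\ge 1/100$, and therefore
$$|m|\ \le\ 1-\frac{1-\cos(1/100)}{100}\ =:\ c_1\ <\ 1,$$
a universal constant (numerically $c_1\approx 1-5\cdot 10^{-7}$).

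There is no genuine analytic obstacle here; the only point requiring care is that the hypothesis is a supremum over all arc centers, and the argument works precisely because it need only be invoked at the one cleverly chosen center $z=\phi_0=\arg m$, where the first moment of $\mu$ points. To pass to Corollary~\ref{cor:1} one then uses the contrapositive — $|\mathbb{E}_{x_0,y}(t)|>c_1$ forces $y$ to be near-deterministic — together with the Main Result applied at $t=c/\lambda$ for $c$ small enough that $e^{-c}$ is close to $1$, and the bound $\int_\Omega p_t(x_0,y)\,dy\le 1$; these together force $|\mathbb{E}_{x_0,y}(t)|>c_1$ on all but a small fraction of $B_{\sqrt t}(x_0)$.
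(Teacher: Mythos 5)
Your proof is correct, and your diagnosis of the hypothesis is right: as printed, the lemma assumes the near-deterministic condition $\sup_{z\in\mathbb{T}}\mathbb{P}(\,\cdot\,)\ge 99/100$, under which the conclusion fails (take $X\equiv 0$), whereas in the proof of Corollary~\ref{cor:1} the lemma is invoked exactly on the set $\Omega_1$ of points that are \emph{not} near-deterministic, so the intended hypothesis is the reversed inequality $\sup_{z}\mathbb{P}(\,\cdot\,)\le 99/100$ --- the version you prove, and your closing remark about how it feeds into Corollary~\ref{cor:1} by contraposition matches the paper's argument there. As for the proof itself, the paper's entire justification is the one-line assertion that the bound ``follows from the convexity of $\mathbb{S}^1$'': the mean $\mathbb{E}\exp(iX)$ is an average of unit vectors, and if at least $1/100$ of the mass sits at circular distance more than $1/100$ from every fixed direction, strict convexity of the disk pulls the average a definite amount inside. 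Your computation --- writing $|m|=\int_{\mathbb{T}}\cos(\theta-\arg m)\,d\mu$, discarding the arc centered at $\arg m$, and bounding $1-\cos$ from below off that arc --- is precisely the quantitative realization of that convexity claim, with the bonus of an explicit constant $c_1=1-\bigl(1-\cos(1/100)\bigr)/100$; the observation that the hypothesis need only be applied at the single center $z=\arg m$ is the key point and you make it explicitly. The only interpretive convention worth recording is that $\left|X \bmod 2\pi - z\right|$ is to be read as distance on $\mathbb{T}$, as you do.
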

\begin{proof}
This follows from the convexity of $\mathbb{S}^1$.
\end{proof}

\begin{proof}
We negate the statement and let $c \rightarrow 0$. Negating the statement means that for any $c>0$ there is an example of an eigenfunction $H(A) \psi = \lambda \psi$ in some domain that is localized at $x_0$ and where, for $t = c/\lambda$, the measure of points in $\sqrt{c/\lambda}-$neighborhood of $x_0$ that is near-deterministic is only $9/10-$th of the measure. The proof of the main result implies, for $t=c/\lambda$,
$$ e^{-c} \leq \int_{\Omega}  \left| \left[ \mathbb{E}_{\omega(t) =y} e^{-S_t(F|\omega)} \right] \right|  p_t(x,y) dy.$$
We introduce the set 
$$\Omega_1 = \left\{y \in \Omega \cap B_{\sqrt{t}}(x_0): y~\mbox{not near-deterministic} \right\}$$
and decompose $\Omega = \Omega_1 \cup (\Omega \setminus \Omega_1)$. Then, appealing to Lemma \ref{lem:mini},
\begin{align*}
\int_{\Omega}  \left| \left[ \mathbb{E}_{\omega(t) =y} e^{-S_t(F|\omega)} \right] \right|  p_t(x,y) dy &\leq \int_{\Omega_1}  \left| \left[ \mathbb{E}_{\omega(t) =y} e^{-S_t(F|\omega)} \right] \right|  p_t(x,y) dy \\
&+\int_{\Omega \setminus \Omega_1}  \left| \left[ \mathbb{E}_{\omega(t) =y} e^{-S_t(F|\omega)} \right] \right|  p_t(x,y) dy\\
&\leq  c_1 \int_{\Omega_1}  p_t(x,y) dy +  \int_{\Omega \setminus \Omega_1}    p_t(x,y) dy.
\end{align*}
Since $\Omega_1$ contains a positive proportion of the measure in the $\sqrt{t}$-neighborhood and $p_t(x, \cdot)$ contains a positive proportion of measure in the same neighborhood, we have
$$ c_1 \int_{\Omega_1}  p_t(x,y) dy +  \int_{\Omega \setminus \Omega_1}    p_t(x,y) dy \leq 1 - c_2$$
for some absolute constant $c_2$. This then leads to a contradiction when $c \rightarrow 0$.
\end{proof}

\textit{Remark.} This argument clearly has some wiggle room. One could, for example, let one of the parameters go to 0 (or 1) at a certain rate depending on $c$. However, since  Corollary \ref{cor:1} is mainly intended to be an illustrative example of the more important general underlying principle, we leave such variations to the interested reader.

\section{Proof of Corollary \ref{cor:2}}
\begin{proof}
  Using Lemma \ref{lem:nice} we may assume without loss of generality that we have conjugated $H(A)$ by some $e^{-i\phi}$ to remove the conservative part of $A$, i.e. that $\nabla \cdot A\equiv 0$.  We start with a Taylor expansion of the vector field,
  $A = A_{\mbox{\tiny lin}} + A_{\mbox{\tiny nonlin}}$, followed by a corresponding Taylor expansion of the path integral.  Recall from~\eqref{LinearSplitting}, that $A_{\mbox{\tiny lin}}=A_1+A_2$, where
  \begin{align*}
    A_1(x)=A(x_0)+\frac{1}{2}\left(J(x_0)+J(x_0)^T\right)(x-x_0)\;,\;
    A_2(x)=\frac{\curl A(x_0)}{2}\,R(x-x_0)~.
  \end{align*}
  It is clear that $A_1=\nabla f$, where
  \begin{align*}
    f(x)=A(x_0)\cdot(x-x_0)+\frac{1}{2}\, (x-x_0)^T J(x_0) (x-x_0)~.
  \end{align*}
  It follows that
  \begin{align*}
    \nabla\cdot A(x_0)&=\nabla\cdot A_{\mbox{\tiny lin}}(x)=\nabla\cdot A_{1}(x)=\Delta f(x)~,\\
    \curl A(x_0)&=\curl A_{\mbox{\tiny lin}}(x)=\curl A_{2}(x)~.
  \end{align*}
  By assumption, $\Delta f=\nabla\cdot A(x_0)=0$, so we have, by appealing to It\^o's lemma, 
$$ \int_0^t A_1 \cdot d\omega(s) = f(\omega(t)) - \tfrac{1}{2}\int_0^t \Delta f(\omega(s))ds = f(\omega(t))~. $$
It follows that 
$$ \int_0^t A_{\mbox{\tiny lin}}(x,y)  \cdot d\omega(s) = f(\omega(t)) + \frac{\curl A(x_0)}{2}\int_0^t R(x-x_0) \cdot d\omega(s)~,$$
where the first term, $f(\omega(t))$, is completely deterministic (since $\omega(t) = y$). 
It remains to understand the random variable
$$ \int_0^t R(x-x_0) \cdot d\omega(s)~,$$
conditioned on $\omega(0) = x_0$ and $\omega(t) = y$.  This random variable is not deterministic, as it will depend on the actual path the Brownian motion takes.
At this point, we assume, without loss of generality, that $x_0=0$.
We first note that both Brownian motion and the vector field are invariant under rotation.  Therefore, the random variable can only depend on $\| \omega(t) \| = \|y\|$  and $t$. There is an additional scaling symmetry. Note that, for any parameter $\alpha > 0$, the Brownian motion satisfies 
$$ \omega(\alpha\,t)  \equiv \sqrt{\alpha} \cdot \omega(t)$$ 
in the sense of both random processes being identical.
This leads to a parabolic scaling. The likelihood of a fixed Brownian particle traveling along a fixed path from $\omega(0) = 0$ to $\omega(t) = y$ is the same as the likelihood of the rescaled particle $\omega(0) = 0$ traveling to $\omega(1) = y t^{-1/2}$. Therefore, after a change of variables,
$$ \int_0^t Rx\cdot d\omega(s) = t \int_0^1 Rx\cdot d\omega(t \cdot s) = t^2 \int_0^1 Rx \cdot d\omega(s).$$
However, for $t=1$ and $\|y\| \sim 1$, this is simply a random variable with some deterministic mean value depending only on the endpoint, and with some nonzero standard deviation spread over an interval of size $\sim 1$. Therefore, by scaling,
$$ \frac{\curl A(x_0)}{2}\int_0^t Rx \cdot d\omega(s)~ \quad \mbox{is a random variable spread over} \quad
\frac{|\curl A(x_0)|}{2}\, t^2~.$$
Suppose now that $(\lambda,\psi)$ is an eigenpair of $H(A)$, with $|\psi(x_0)|=\|\psi\|_{L^\infty}$.
Then
$$ \int_{\Omega}  \left| \left[ \mathbb{E} ~e^{-S_t(A|\omega)} \big| \omega(0) = x_0 \wedge \omega(t) =y \right] \right|  p_t(x,y) dy \geq  e^{-\lambda t}.$$
For $t = 0.01/\lambda$, we deduce that
$$ \int_{\Omega}  \left| \left[ \mathbb{E} ~e^{-S_t(A|\omega)} \big| \omega(0) = x_0 \wedge \omega(t) =y \right] \right|  p_t(x,y) dy \geq  0.95.$$
Ignoring the non-linear part of the vector field, this inequality by itself requires that the path integral is highly concentrated over most points and, for some universal constant $c>0$, 
$$ \frac{| \curl A(x_0)|}{2} \, \frac{0.1^2}{\lambda^2} \leq c.$$
Rescaling and renaming the constant $c$, we deduce
$$  \left| \curl A(x_0) \right| \leq c \cdot \lambda^2.$$
We conclude by noting that, in order for this argument to be correct, we require that the higher-order contributions are locally small and
$$ \left| \int_0^t A_{\mbox{\tiny nonlin}}(x,y)  \cdot d\omega(s) \right| \ll   \left| (\curl A)(x_0) \right| \cdot \frac{1}{\lambda^2}.$$
 \end{proof}

\section{Proof of Corollary \ref{cor:3}}
\begin{proof}
Going through the proof of the main result, we can skip the step of bounding the heat kernel in terms of the Gaussian and will arrive at the inequality
$$\bigintss_{\Omega} \left| \mathbb{E}_{\omega(0) = x, \omega(t) = y}  \exp\left( i  \int_{0}^{t} F \cdot d\omega(s) \right) \right|   p_t(x,y) dy \geq e^{-\lambda \cdot t} \frac{|\psi(x)|}{\| \psi\|_{L^{\infty}}}.$$
We integrate both sides of the inequality over $\mathbb{R}_{\geq 0}$ with respect to $t$ to get
$$ \frac{1}{\lambda}  \frac{|\psi(x)|}{\| \psi\|_{L^{\infty}}} \leq  \int_0^{\infty}  \int_{\Omega} \left| \mathbb{E}_{\omega(0) = x, \omega(t) = y}  \exp\left( i  \int_{0}^{t} F \cdot d\omega(s) \right) \right|   p_t(x,y) dy dt.$$
If we denote by $\mathbb{E}$ the expected value of the stochastic integral and use the Cauchy-Schwarz inequality, we obtain
$$ \frac{1}{\lambda}  \frac{|\psi(x)|}{\| \psi\|_{L^{\infty}}} \leq \left(  \int_0^{\infty}  \int_{\Omega} |\mathbb{E}|^2 \cdot p_t(x,y) dy \,dt\right)^{1/2}  \left(  \int_0^{\infty}  \int_{\Omega} p_t(x,y) dy \,dt\right)^{1/2}.$$
The second integral has a simple closed-form expression: recall that
$$ p_t(x,y) = \sum_{k=1}^{n} e^{-\lambda_k t} \phi_k(x) \phi_k(y)~,$$
from which we deduce that
$$ \int_{\Omega} \int_{0}^{\infty}    p_t(x,y) dt dy= \sum_{k=1}^{n} \frac{1}{\lambda_k} \phi_k(x) \left( \int_{\Omega} \phi_k(y) dy \right).$$
Now we solve the PDE $-\Delta v = 1$ by expanding the solution $v$ into the eigenfunctions of the Laplacian $-\Delta \phi_k = \lambda_k \phi_k$. Taking inner products on both sides leads to
$$\int_{\Omega} \phi_k dx = \left\langle 1, \phi_k \right\rangle =  \left\langle -\Delta v, \phi_k \right\rangle =   \left\langle v,  -\Delta \phi_k \right\rangle = \lambda_k  \left\langle v,  \phi_k \right\rangle ,$$
from which we deduce that
$$  \int_{\Omega} \int_{0}^{\infty}    p_t(x,y) dt dy = v(x) \qquad \mbox{where} \qquad -\Delta v(x) = 1$$
with Dirichlet boundary conditions.
\end{proof}



\begin{thebibliography}{4}


\bibitem{agmon1} S. Agmon, Lectures on exponential decay of solutions of second order elliptic equations. Bounds
on eigenfunctions of N-body Schr\"odinger operators. Mathematical Notes, Princeton Univ. Press,
Princeton, N.J., 1982.

\bibitem{ah}  Y. Aharonov and D. Bohm, Significance of electromagnetic potentials in quantum theory, Physical Review. 115 (1959), p. 485--491.


\bibitem{alt} 
R. Altmann, P. Henning, D. Peterseim, Quantitative Anderson localization of Schrodinger eigenstates under disorder potentials, arXiv:arXiv:1803.09950, to appear in M3AS

\bibitem{altmann}  R. Altmann and D. Peterseim, Localized computation of eigenstates of random Schr{\"o}dinger operators. SIAM J. Sci. Comput. 41 (2019), no. 6, B1211--B1227.

\bibitem{arnold0} D. Arnold, G. David, D. Jerison, S. Mayboroda, and M. Filoche. Effective confining potential of quantum states in disordered media. Physical Review Letters, 116(5), 2016

\bibitem{arnold} D. Arnold, G. David, M. Filoche, D. Jerison, and S. Mayboroda. Localization of eigenfunctions via an effective potential. Communications in Partial Differential Equations, to appear.

\bibitem{arnold2} D. Arnold, G. David, M. Filoche, D. Jerison, and S. Mayboroda. Computing spectra without solving eigenvalue problems. SIAM Journal of Scientific Computing, 41(1), B69-B92. 2019. 

\bibitem{broderix} K. Broderix, D. Hundertmark and H. Leschke, 
Continuity properties of Schr\"odinger semigroups with magnetic fields. 
Rev. Math. Phys. 12 (2000), no. 2, 181--225.



\bibitem{dim} M. Dimassi and J. Sj\"ostrand, Spectral asymptotics in the semi-classical limit, London Mathematical
Society Lecture Note Series, vol. 268, Cambridge University Press, 1999.




\bibitem{felix} S. F\'elix, M. Asch, M. Filoche and B. Sapoval, Localization and increased damping in irregular acoustic cavities. Journal of sound and vibration, 299 (2007), p. 965-976.

\bibitem{fil} M. Filoche and S. Mayboroda, 
Universal mechanism for Anderson and weak localization. 
Proc. Natl. Acad. Sci. USA 109 (2012), no. 37, 14761-14766. 

\bibitem{filoche} M. Filoche, S. Mayboroda and T. Tao,
The effective potential of an M-matrix.
J. Math. Phys. 62 (2021), no. 4, Paper No. 041902, 15 pp.


\bibitem{Gopalakrishnan2017}
J.~Gopalakrishnan and B.~Q. Parker.
\newblock {Pythonic FEAST}.
\newblock Software hosted at
  {\texttt{https://bitbucket.org/jayggg/pyeigfeast}}.
  

\bibitem{Gopalakrishnan2019}
J.~Gopalakrishnan, L.~Grubi\v{s}i\'{c}, J.~Ovall, and B.~Parker.
\newblock Analysis of {FEAST} spectral approximations using the {DPG}
  discretization.
\newblock {\em Comput. Methods Appl. Math.}, 19(2):251--266, 2019.


  
\bibitem{Gopalakrishnan2020}
J.~Gopalakrishnan, L.~Grubi\v{s}i\'{c}, and J.~Ovall.
\newblock Spectral discretization errors in filtered subspace iteration.
\newblock {\em Math. Comp.}, 89(321):203--228, 2020.


\bibitem{har} E. Harrell and A. Maltsev, 
On Agmon metrics and exponential localization for quantum graphs. 
Comm. Math. Phys. 359 (2018), no. 2, 429--448.


\bibitem{helffer} B. Helffer, Semi-classical analysis for the Schr\"odinger operator and applications.
Springer Lecture Notes in Math, 1988.

\bibitem{helffer-mohamed} B. Helffer and A. Mohamed.  Semiclassical analysis for the ground state energy of a Schr/"odinger operator with magnetic wells. Journal of Functional Analysis, 138 (1996), 40-81.

\bibitem{helffer-mohamed2} B. Helffer, and A. Morame. Magnetic bottles in connection with superconductivity. Journal of Functional Analysis, 185 (2001), 604-680.

\bibitem{helff2} B. Helffer and J. Sj\"ostrand, Multiple wells in the semi-classical limit I, Comm. PDE 9, (1984), p. 337--408.


\bibitem{helff3} B. Helffer and J. Sj\"ostrand, Analyse semi-classique pour l\'equation de Harper (avec
application a l'equation de Schr\"odinger avec champ magn\'etique).
M\'emoire de la Soci\'et\'e Math\'ematique de France 34 (1988).

\bibitem{helff4} B. Helffer and J. Sj\"ostrand,  Effet tunnel pour l\'equation de Schr\"odinger avec champ magn\'etique, Annali della Scuola Normale Superiore di Pisa, 14 (1987), p. 625--657


\bibitem{hoskins} J. Hoskins, H. Quan and S. Steinerberger, Magnetic Schr\"odinger Operators and Landscape Functions, arXiv:2210.02646

\bibitem{jones} P. W. Jones and S. Steinerberger, Localization of Neumann Eigenfunctions near Irregular Boundaries, Nonlinearity 32, 768-776 (2019).

\bibitem{keller} M. Keller and F. Pogorzelski, Agmon estimates for Schr\"odinger operators on graphs, arXiv:2104.04737

\bibitem{kobayashi} S. Kobayashi. La connexion des vari\'et\'es fibr\'ees II, Comptes Rendus, 238, 443-444. (1954)

\bibitem{lierl} J. Lierl and S. Steinerberger, A Local Faber-Krahn inequality and Applications to Schr{\"o}dinger's Equation, arxiv, Comm. PDE, 43, 66--81 (2018).

\bibitem{lu} J. Lu and S. Steinerberger, Detecting Localized Eigenstates of Linear Operators, Res. Math. Sci. 5, no. 34 (2018)

\bibitem{mat} H. Matsumoto, Semiclassical asymptotics of eigenvalues for Schr\"odinger operators with magnetic fields, Journal of Functional Analysis 129, no. 1 (1995): 168-190.

\bibitem{li2} J. Lu, C. Murphey and S. Steinerberger, Fast Localization of Eigenfunctions via Smoothed Potentials,  Journal of Scientific Computing 90, Article number: 38 (2022) 

\bibitem{montgomery} R. Montgomery, Hearing the zero locus of a magnetic field. Communications in mathematical physics, 168 (1995), 651-675. 

\bibitem{mugnolo} D. Mugnolo, Eigenvector estimates by landscape functions: some variations on the Filoche--Mayboroda--van den Berg bound. arXiv preprint arXiv:2301.06126.

\bibitem{phys0}   A. V. Nenashev, S. D. Baranovskii, K. Meerholz and F. Gebhard, Quantum states in disordered media. I. Low-pass filter approach. Physical Review B, 107 (2023), 064206.

\bibitem{phys}  A. V. Nenashev, S. D. Baranovskii, K. Meerholz and F. Gebhard, Quantum states in disordered media. II. Spatial charge carrier distribution. Physical Review B, 107 (2023), 064207.

\bibitem{ovall} J.S. Ovall and R. Reid.  An algorithm for identifying vectors exhibiting strong spatial localization.  Mathematics of Computation, 92, 1005-1031 (2023).  

\bibitem{Polizzi2009}
E.~Polizzi.
\newblock {Density-matrix-based algorithm for solving eigenvalue problems}.
\newblock {\em Phys. Rev. B}, {79}({11}), {MAR} {2009}.
  
\bibitem{poggi} B. Poggi, Applications of the landscape function for Schr\"odinger operators with singular potentials and irregular magnetic fields, arXiv:2107.14103

\bibitem{rachh} M. Rachh and S. Steinerberger, On the location of maxima of solutions of Schroedinger's equation, Comm. Pure. Appl. Math., 71, 1109--1122 (2018).


\bibitem{Schoeberl2014}
J.~Sch\"{o}berl.
\newblock C++11 implementation of finite elements in {NGS}olve.
\newblock ASC Report 30/2014, Vienna University of Technology, 2014.


\bibitem{Schoeberl2021}
J.~Sch\"{o}berl.
\newblock {Netgen/NGSolve}.
\newblock Software hosted at {\texttt{https://ngsolve.org/}}.


  
\bibitem{shen} Z. Shen, Eigenvalue asymptotics and exponential decay of eigenfunctions for Schr\"odinger operators with magnetic fields, Transactions of the American Mathematical Society 348, no. 11 (1996): 4465-4488.

\bibitem{steini} S. Steinerberger, Localization of Quantum States and Landscape Functions, Proc. Amer. Math. Soc., 145, 2895--2907 (2017).

\bibitem{steini3} S. Steinerberger, Regularized Potentials of Schrodinger Operators and a Local Landscape Function, Comm. PDE 46, 1262-1279 (2021).

\bibitem{steini4} S. Steinerberger, Effective Bounds for the Decay of Schr\"odinger Eigenfunctions and Agmon bubbles, Israel J. Math, to appear

\bibitem{steini5} S. Steinerberger, An Agmon estimate for Schr\"odinger operators on graphs. Letters in Mathematical Physics, 113 (2023), 12.

\bibitem{Tang2014}
P.~T.~P. Tang and E.~Polizzi.
\newblock F{EAST} as a subspace iteration eigensolver accelerated by
  approximate spectral projection.
\newblock {\em SIAM J. Matrix Anal. Appl.}, 35(2):354--390, 2014.
\end{thebibliography}
\end{document}